\newtheorem{theorem}{Theorem}
\newtheorem{claim}[theorem]{Claim}
\newtheorem*{theorem*}{Theorem}
\newtheorem*{claim*}{Claim}
\newtheorem*{remark}{Remark}
\newtheorem*{lemma*}{Lemma}
\newtheorem*{defn}{Definition}
\newtheorem{lemma}[theorem]{Lemma}
\newtheorem*{corollary*}{Corollary}
\newtheorem{prop}[theorem]{Proposition}
\renewcommand{\P}{\mathop {\mathbb P}}
\newcommand{\eps}{\epsilon}
\newcommand{\R}{\mathbb{R}}
\newcommand{\E}{\mathop \mathbb{E}}
\newcommand{\ip}[2]{\left\langle #1,#2\right\rangle}
\title{Slicing the hypercube is not easy}
\author{Gal Yehuda}
\address{Department of Computer Science, Technion-IIT}
\email{ygal@technion.ac.il}
\author{Amir Yehudayoff}
\address{Department of Mathematics, Technion-IIT}
\email{yehuday@technion.ac.il}
\begin{document}

\begin{abstract}
We prove that at least $\Omega(n^{0.51})$ hyperplanes
are needed to slice all edges of the $n$-dimensional hypercube.
We provide a couple of applications:
lower bounds on the computational complexity of parity,
and a lower bound on the cover number of the hypercube
by skew hyperplanes.
\end{abstract}

\maketitle

\section{Introduction}

The Boolean hypercube has a natural geometric representation in Euclidean space.
Its vertices are the $2^n$ points $\{\pm 1\}^n \subset \R^n$.
Its edges are the $n 2^{n-1}$ line segments 
$[x,y] \subset \R^n$ connecting
every two adjacent vertices  
(i.e., $x$ and $y$ differ in a single coordinate).
This representation is important and useful.
It leads to many applications
in algorithm design, optimization, machine learning, and more.

The question we address is
{\em How many hyperplanes are needed to slice all edges?}
A hyperplane $h = \{z \in \R^n : \ip{z}{v} = \mu\}$ slices
the edge $[x,y]$ if $x$ and $y$ lie on two different sides of $h$; that is, $(\ip{x}{v}-\mu)(\ip{y}{v}-\mu) < 0$.

This question has attracted a lot of attention.
It was first studied in the 1970s by O'Neil
with motivation from machine learning~\cite{O_Neil_1971},
and by Gr\"unbaum with motivations from convex geometry~\cite{10.2307/2317670}.
Additional motivation comes from computational complexity theory (see e.g.~\cite{paturi1990threshold}).
For an excellent introduction to this and related topics,
see the survey of Saks~\cite{Saks_1993}.

It is tempting to guess that $n$ hyperplanes are needed to slice all edges. 
This, however, was refuted by M.\ Paterson in an unpublished
example (see~\cite{Saks_1993}).
Paterson showed that only $5$ hyperplanes suffice to
slice the $6$-dimensional cube. By sub-additivity,
$\lceil \tfrac{5n}{6} \rceil$ hyperplanes suffice to slice the $n$-dimensional cube.
This is the best upper bound known even today.

In terms of lower bounds, O'Neil proved that the minimum number of slicing hyperplanes 
is $\geq \Omega(n^{0.5})$. The reason is that a single hyperplane
can slice a fraction of $\leq O(n^{-0.5})$ edges,
and this is sharp~\cite{baker1969generalization}.
In several special cases, better lower bounds are known.
In dimension $n\leq 4$, Emamy-Khansary~\cite{EMAMYKHANSARY1986221} proved
that at least $n$ hyperplanes are needed.
If all entries in all the normal vectors $v_i$ are non-negative
then at least $n$ hyperplanes are needed~\cite{AHLSWEDE1990137}.
If all normal vectors have coefficients in $\{\pm 1\}$
then at least $\frac{n}{2}$ hyperplanes are needed~\cite{138716d202994622afcef5b40279ef04,Saks_1993}.

We provide the first improvement\footnote{We did not attempt
to optimize the constant in the exponent.} in fifty years.

\begin{theorem*}
Slicing all edges in $\{\pm 1\}^n$ requires 
$\Omega(n^{0.51})$ hyperplanes.
\end{theorem*}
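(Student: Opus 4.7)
Our plan is to recast the question in Boolean-function language and refine, by a factor of $n^{\delta}$ with $\delta \geq 0.01$, the counting inequality underlying O'Neil's $\Omega(\sqrt n)$ bound. Associate to each hyperplane $h_j = \{z \in \R^n : \ip{v_j}{z} = \mu_j\}$ the halfspace $f_j \colon \{\pm 1\}^n \to \{\pm 1\}$ given by $f_j(x) = \operatorname{sign}(\ip{v_j}{x}-\mu_j)$ and write $\mathsf{I}(f_j) = \sum_{i=1}^n \P_x[f_j(x) \neq f_j(x^{(i)})]$ for its total influence. The assumption that $h_1,\ldots,h_k$ slice every edge is exactly that for every edge $(x,x^{(i)})$ some $f_j$ separates its endpoints; summing over all $n\cdot 2^{n-1}$ edges gives the key counting identity
$$\sum_{j=1}^k \mathsf{I}(f_j) \;\geq\; n.$$
Coupled with the Erd\H{o}s--Littlewood--Offord (ELO) ceiling $\mathsf{I}(f_j) \leq C\sqrt n$, this reproduces $k \geq \Omega(\sqrt n)$.

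To beat $\sqrt n$, I would implement a stability--overlap dichotomy. Call a normal $v$ \emph{flat} if all $|v_i|$ lie within a factor $1 \pm n^{-\delta}$ of their mean. The \textbf{stability step} is a quantitative sharpening of ELO: a halfspace with non-flat normal satisfies $\mathsf{I}(f) \leq C\sqrt n\cdot n^{-\delta}$. This should follow from tracking the Cauchy--Schwarz deficit in the only non-sharp step of the standard derivation, $\|v\|_1 \leq \sqrt n\,\|v\|_2$, which loses a polynomial factor as soon as $|v|$ is not close to constant. The \textbf{overlap step} is to show that a cover consisting mostly of flat halfspaces has average slicing multiplicity $\ex_e \mix(e) \gtrsim n^{\delta}$. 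I would prove this by a Plancherel computation on $\{\pm 1\}^n$: the edge-slicing indicator of a flat halfspace has Fourier mass concentrated on characters of low degree, and two such indicators are forced into positive correlation, inflating $\sum_e \mix(e)^2$ and hence $\sum_e \mix(e) = \sum_j \mathsf{I}(f_j)$.

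Combining the two steps, partition the cover into flat normals $F$ and non-flat normals $F^c$. If $|F^c|\geq (1-n^{-\delta})k$, the stability step gives $\sum_{j}\mathsf{I}(f_j) \leq |F|\sqrt n + |F^c|\sqrt n \cdot n^{-\delta} \leq 2k\sqrt n \cdot n^{-\delta}$, and the counting identity forces $k \geq \Omega(n^{1/2+\delta})$. Otherwise $|F|\geq n^{-\delta}k$ flat halfspaces are present; the overlap step amplifies the left-hand side of the counting identity to $\sum_j \mathsf{I}(f_j) \geq n\cdot n^{\delta}$, again yielding $k \geq \Omega(n^{1/2+\delta})$. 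Choosing $\delta$ slightly above $0.01$ then gives $\Omega(n^{0.51})$.

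The hardest ingredient is the quantitative overlap step. A priori, two flat halfspaces with ``different'' sign patterns (e.g., majorities on disjoint coordinate subsets) can slice nearly disjoint edge sets, so pairwise overlap alone may be too weak. Obtaining an honest $n^{\delta}$ gain in $\ex_e\mix(e)$ seems to require using the covering constraint globally --- for instance through a second-moment bound $\sum_e \mix(e)^2 \geq n^\delta \cdot n\cdot 2^{n-1}$ that exploits simultaneously the low-degree Fourier support of flat cutters and the absence of uncovered edges. A secondary concern is bookkeeping: the two sides of the dichotomy must be chained so that the $\delta$ surviving at the end is an honest positive constant rather than a quantity absorbed by lower-order losses in the ELO stability estimate.
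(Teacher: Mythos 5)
Both of your key steps are false. For the stability step, take $v$ with $n/2$ entries equal to $1$ and $n/2$ equal to some tiny $\eta>0$: this is maximally non-flat in your sense (half the entries differ from the mean by a constant factor), yet the halfspace is essentially a majority on $n/2$ coordinates with total influence $\Theta(\sqrt{n})$, not $O(\sqrt{n}\,n^{-\delta})$. The ELO ceiling really reads $\mathsf{I}(f)\lesssim\|v\|_1/\|v\|_2$, and this ratio stays $\Theta(\sqrt{n})$ on a huge set of badly non-flat vectors; the ``Cauchy--Schwarz deficit'' simply does not track your flatness condition, which is an exceedingly thin neighborhood of the all-ones direction. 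For the overlap step, the $\Theta(\sqrt{n})$ halfspaces $f_t(x)=\mathrm{sign}\big(\sum_i x_i - t\big)$ with $|t|\leq\sqrt{n}$ all share the perfectly flat normal $(1,\ldots,1)$, yet they slice pairwise \emph{disjoint} edge sets (flipping one coordinate changes $\sum_i x_i$ by exactly $2$, so each edge straddles exactly one threshold), hence the average slicing multiplicity of this family is $1$, not $n^\delta$. You flagged the overlap step as the risky one, but it already fails for identical normals, not only for ``different sign patterns.''

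The deeper obstruction is that any argument controlling only the total incidence count $\sum_j \mathsf{I}(f_j)$ is stuck at $k=\Omega(\sqrt n)$: the family of shifted majorities above saturates the counting identity at $\approx n$ with only $O(\sqrt n)$ hyperplanes while covering all but an exponentially small fraction of edges, so no refinement that stays at the level of that identity can beat $\sqrt n$. To break the barrier one must actually exhibit an unsliced edge, and this is what the paper does, via a route with essentially no overlap with yours: it decomposes the matrix of normals into ``many-scales'' rows and rows whose restricted columns have small mass; it invokes Bang's lemma (from the plank problem) to produce a single point simultaneously at distance $\geq\theta$ from all the hyperplanes of the second kind; it rounds that point to a cube vertex in two phases, the second inducing a general product measure on a low-dimensional subcube; and it controls the surviving hyperplanes with new Sperner/LYM-type antichain bounds and anti-concentration for product measures, plus a separate many-scales anti-concentration lemma for the first kind of rows. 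The constructive ingredient --- Bang's lemma plus the two-phase rounding --- is precisely what your proposal lacks, and the counting framework you adopt cannot supply it.
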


Before discussing the proof,
we provide a couple of applications.

\subsection{Applications}
The theorem immediately implies new lower
bounds in computational complexity theory;
specifically, for computing parity with threshold circuits.
The high-level reason is that the $k$ gates in the first (closest
to the inputs) layer of a threshold
circuit for parity always yield $k$ slicing hyperplanes.
The best known lower bounds were proved 
about thirty years ago.
Paturi and Saks proved an $\Omega(n^{1.5})$
lower bound on the number of wires in any depth-two
threshold circuit computing parity~\cite{paturi1990threshold}.
Impagliazzo, Paturi, and Saks
proved a super-linear lower bound on the number
of wires in any constant-depth threshold circuit for parity~\cite{impagliazzo1997size}.
In special cases, better lower bounds are known.
The main result of Paturi and Saks is, in fact, that if the coefficients
in the depth-two circuit are integers with bit-complexity $O(\log(n))$,
then a stronger $\Omega(n^2/\log^2(n))$
lower bound on the wire complexity holds~\cite{paturi1990threshold}.

The theorem implies the first $\omega(n^{0.5})$ lower bound on the number
of gates in the first layer of any threshold circuit computing parity, no matter how deep it is.
It also implies the first $\omega(n^{1.5})$
lower bound on the number of wires in any depth-two
threshold circuit for parity.
An extension to other constant depths can be obtained using the ideas
in~\cite{impagliazzo1997size}.

\begin{corollary*}
The number of wires in a depth-two threshold circuit
for parity is $\Omega(n^{1.51})$.
\end{corollary*}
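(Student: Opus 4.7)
The plan is to reduce the parity circuit bound to the hyperplane slicing question and then combine the main theorem with the classical Littlewood--Offord fan-in estimate, in the spirit of Paturi--Saks.

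The first step is a slicing reduction: if $g_1,\ldots,g_k$ are the first-layer gates of any depth-two threshold circuit computing parity on $\{\pm 1\}^n$, then the hyperplanes they define must slice every edge of the cube. Indeed, if an edge $[x,y]$ is not sliced by any $g_i$, then every bottom gate agrees on $\{x,y\}$ and the top threshold gate therefore agrees on $x$ and $y$ as well, contradicting the fact that parity disagrees on adjacent vertices. Applying the main theorem immediately yields the bottom-layer gate lower bound $k \geq c\, n^{0.51}$.

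The second step bounds fan-ins. The Erd\H{o}s/Littlewood--Offord inequality implies that a hyperplane whose normal is supported on $d$ of the $n$ coordinates slices at most $O(d^{1/2}\cdot 2^{n-1})$ of the $n\cdot 2^{n-1}$ cube edges. Writing $d_i$ for the fan-in of $g_i$ and summing over $i$,
$$\sum_{i=1}^{k} d_i^{1/2} \;\geq\; c\, n.$$
The third step plugs the gate bound $k \geq c\, n^{0.51}$ and this fan-in inequality into the Paturi--Saks wire-counting framework. That framework -- whose original application of the O'Neil bound $k \geq c\sqrt n$ gives $W \geq c\, n^{1.5}$ for the total wire count $W = \sum_i d_i + k$ -- carries the gate-exponent improvement from $0.5$ to $0.51$ directly into a wire-exponent improvement from $1.5$ to $1.51$.

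I expect the main obstacle to be the careful bookkeeping in the third step. A direct Cauchy--Schwarz on the two inequalities gives only $W \geq (\sum d_i^{1/2})^2 / k \geq c^2 n^{1.49}$, which goes in the wrong direction as $k$ grows. The Paturi--Saks argument therefore uses the per-coordinate refinement of the Littlewood--Offord estimate -- for each $j \in [n]$, the gates containing $j$ in their support satisfy $\sum_{i:\, j \in S_i} d_i^{-1/2} = \Omega(1)$ -- and balances these $n$ local inequalities against the global gate bound. It is this balancing that transports the improvement in the gate exponent linearly into the wire exponent, converting our $\Omega(n^{0.51})$ gate lower bound into the claimed $\Omega(n^{1.51})$ wire lower bound.
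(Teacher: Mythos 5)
Your overall plan---the slicing reduction, the Littlewood--Offord fan-in estimate, then invoking the Paturi--Saks machinery---is the same one the paper uses (the paper's proof is literally one line: cite the main theorem together with Lemma~1 of Paturi--Saks). Steps 1 and 2 are correct. But Step 3, the only place where the $n^{0.51}\to n^{1.51}$ conversion actually has to happen, is a genuine gap: you assert a ``balancing'' without carrying it out, and in fact the ingredients you have assembled do not suffice. Summing the per-coordinate inequalities $\sum_{i:\, j\in S_i} d_i^{-1/2} \geq c$ over $j\in[n]$ just reproduces the global inequality $\sum_i \sqrt{d_i} \geq cn$, so the ``refinement'' buys nothing new; and the three facts you have established---$k \geq c' n^{0.51}$, $\sum_i\sqrt{d_i}\geq cn$, $W=\sum_i d_i$---simply do not imply $W = \Omega(n^{1.51})$. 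For a concrete witness: take $k=n$ gates each with fan-in $d_i = n^{0.4}$. Then $\sum_i \sqrt{d_i} = n^{1.2} \geq cn$ and $k=n \geq c'n^{0.51}$, yet $W = n^{1.4}$, far below $n^{1.51}$. So no bookkeeping that only uses those two facts can give the corollary.

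What Lemma~1 of Paturi--Saks really supplies---and what is absent from your account---is a restriction argument in which a random partial assignment kills bottom gates not merely by exhausting their free variables but by pushing their effective thresholds out of the active range (this is where anti-concentration genuinely enters, at the level of individual gates after restriction), and the slicing lower bound is then applied to the few surviving gates on a large subcube. As written, your third step names the mechanism and then asserts the conclusion rather than deriving it; you should either cite Lemma~1 as a genuine black box without the (incorrect) reconstruction, or read its proof and reproduce the restriction argument.
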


\begin{proof}
Use the theorem and Lemma 1 in~\cite{paturi1990threshold}.
\end{proof}

The second application is more geometric.
Instead of slicing edges, we can ask to cover vertices.
A hyperplane $\{z \in \R^n : \ip{z}{v} = \mu\}$ covers the
vertex $x$ if $\ip{x}{v}=\mu$.
{How many hyperplanes are needed to cover all vertices?}
Just two.
A natural and standard way to make the question
more interesting is to ask the hyperplanes to be skewed
(i.e., all entries in all normal vectors are nonzero).

For the slicing problem, the skew restriction is not severe. 
A small perturbation of a slicing family is also a slicing family.
So, we can assume that all the hyperplanes are skewed.

For the covering problem, it creates a huge difference.
The minimum number of skewed hyperplanes needed to cover
all vertices is known to be at least $\Omega(n^{0.5})$; follows from~\cite{littlewood_offord_1939,erdos1945lemma}.
Again, better results are known in special cases (see~\cite{Saks_1993,138716d202994622afcef5b40279ef04}).

We prove the first improvement on skew cover numbers as well.

\begin{corollary*}
Covering all vertices in $\{\pm 1\}^n$  requires 
$\Omega(n^{0.51})$ skew hyperplanes.
\end{corollary*}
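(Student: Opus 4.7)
The plan is to reduce the skew covering problem to the edge slicing problem by a simple perturbation, and then invoke the main theorem. I will start with an arbitrary family $h_1,\ldots,h_k$ of skew hyperplanes covering every vertex of $\{\pm 1\}^n$, write $h_i=\{z\in\R^n:\ip{z}{v_i}=\mu_i\}$ with each coordinate of every $v_i$ nonzero, and produce from it a family of at most $2k$ slicing hyperplanes.

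The construction is to displace each hyperplane in both directions: define $h_i^{\pm}=\{z:\ip{z}{v_i}=\mu_i\pm\epsilon\}$, where $\epsilon>0$ is chosen smaller than $2\min_{i,j}|v_{i,j}|$. This minimum is positive because it ranges over a finite set of nonzero numbers, so such an $\epsilon$ exists. The output is at most $2k$ hyperplanes $\{h_i^{+},h_i^{-}\}_{i=1}^{k}$.

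The key step is to verify that this family slices every edge. Pick an edge $[x,y]$, so $x$ and $y$ agree on all but one coordinate $j$. By the covering hypothesis, some $h_i$ contains $x$, meaning $\ip{x}{v_i}=\mu_i$. Skewness then forces $\ip{y}{v_i}=\mu_i\pm 2v_{i,j}$, which is strictly larger or strictly smaller than $\mu_i$; let $\pm$ denote the corresponding sign. Because $\epsilon<2|v_{i,j}|$, the neighbor $y$ remains on the same side of $h_i^{\pm}$ as it was of $h_i$, while $x$ crosses to the opposite side. Hence $h_i^{\pm}$ slices $[x,y]$.

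Combining with the main theorem yields $2k\geq\Omega(n^{0.51})$, hence $k\geq\Omega(n^{0.51})$. I do not expect any real obstacle here: the whole corollary is a one-line reduction once one notices that skewness of the covering hyperplanes exactly prevents a neighbor $y$ from sitting on the hyperplane itself, leaving room for the small displacement to flip only the covered vertex.
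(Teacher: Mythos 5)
Your proposal is correct and follows the same argument as the paper: replace each covering hyperplane $\{\ip{z}{v_i}=\mu_i\}$ by the pair $\{\ip{z}{v_i}=\mu_i\pm\epsilon\}$ with $\epsilon$ small, and observe that skewness guarantees one of these two slices any edge incident to a covered vertex. The only difference is that you spell out the explicit choice $\epsilon<2\min_{i,j}|v_{i,j}|$ and verify the slicing in detail, whereas the paper states it more tersely.
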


\begin{proof}
Let $v_1,\ldots,v_k \in \R^n$ be skewed and let $\mu_1,\ldots,\mu_k \in \R$.
If the $k$ hyperplanes defined by $v_i$ and $\mu_i$
are covering, then duplicating each $v_i$ with two thresholds $\mu_i \pm \beta$ for some small $\beta>0$ gives $2k$
hyperplanes that slice all edges.
\end{proof}

\subsection{Outline}
The proof of our main result
brings together ideas from several fields (geometry,
probability theory, partially ordered sets, linear algebra, 
and some specialized structural arguments).
We now sketch some of the main ideas.

Let us assume that $k \leq n^{0.51}$ hyperplanes slice all edges of the cube.
Our task is to locate the missing edge, an edge that is not sliced.
Where should we look for it?
We can try to use randomness.
This is indeed helpful and proves the $\Omega(n^{0.5})$
lower bound, but it is not clear how to get a better lower bound
using just randomness.
We can try to use algebra. This is also fruitful in some cases~\cite{138716d202994622afcef5b40279ef04,Saks_1993}.
But we are dealing with general real numbers
and with the notion of slicing, so it is not clear how algebra can help.
We can try to use topology or geometry.

Our opening move identifies a high-level connection
between the slicing problem and Tarski's plank problem~\cite{tarski1932uwagi}.
The plank problem asks about the minimum total width
of planks (regions between parallel hyperplanes)
needed to cover a convex body.
It was solved by Bang in his celebrated work~\cite{10.2307/2031721}.
We use a lemma that Ball isolated from Bang's solution~\cite{Ball_1991}.

\begin{lemma}[Bang]
\label{lem:bang}
Let $M$ be a $k \times k$ real symmetric matrix with ones on the diagonal.
Let $\gamma_1,\ldots,\gamma_k \in \R$ and $\theta \geq 0$.
Then, there is $\eps \in \{\pm 1\}^k$ so that
for every $i \in [k]$,
$$\big| \theta (M\eps)_i  - \gamma_i \big| \geq \theta.$$
\end{lemma}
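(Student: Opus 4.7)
The plan is to run the clean extremal argument of Ball~\cite{Ball_1991}. First, I would dispose of the trivial case $\theta = 0$, where the required inequality reduces to $|{-}\gamma_i| \geq 0$ and holds automatically. For $\theta > 0$, set $c_i = \gamma_i / \theta$, so that the statement becomes: \emph{there exists $\eps \in \{\pm 1\}^k$ with $|(M\eps)_i - c_i| \geq 1$ for every $i \in [k]$.}

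The central idea is to select $\eps$ by maximizing a carefully chosen quadratic potential on the hypercube, namely
$$\Phi(\eps) \;=\; \ip{\eps}{M\eps} \;-\; 2\ip{c}{\eps}.$$
Let $\eps^* \in \{\pm 1\}^k$ be a maximizer of $\Phi$, and for each $j$ let $\eps'$ be the vector that agrees with $\eps^*$ except in coordinate $j$. The next step is a short symbolic computation: expand $\Phi(\eps') - \Phi(\eps^*)$ using the symmetry of $M$ together with the hypothesis $M_{jj} = 1$. The cross terms collapse and one is left with an identity of the shape
$$\Phi(\eps') - \Phi(\eps^*) \;=\; 4\Bigl(1 - \eps_j^*\bigl((M\eps^*)_j - c_j\bigr)\Bigr).$$
Maximality of $\eps^*$ forces the left-hand side to be non-positive, which rearranges to $\eps_j^* \bigl((M\eps^*)_j - c_j\bigr) \geq 1$, and in particular $|(M\eps^*)_j - c_j| \geq 1$. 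Multiplying through by $\theta$ recovers the conclusion of the lemma.

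The only real obstacle is guessing the right potential $\Phi$; once it is written down the rest is mechanical. The quadratic form $\ip{\eps}{M\eps}$ is exactly what forces the hypothesis $M_{jj} = 1$ into play, because a single sign flip contributes a deterministic $\delta^2 M_{jj} = 4$ to $\ip{\eps}{M\eps}$ independent of $\eps^*$, and this constant is what produces the lower bound $\geq 1$ after dividing by $4$. A more natural-looking but non-smooth candidate such as $\sum_i |(M\eps)_i - c_i|$ does not interact with the diagonal assumption in this clean way. The linear term $-2\ip{c}{\eps}$ is then essentially forced: it is precisely what is needed so that flipping the $j$-th sign picks up the correction $4\eps_j^* c_j$, converting $(M\eps^*)_j$ into $(M\eps^*)_j - c_j$ inside the local-optimality inequality.
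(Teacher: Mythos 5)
Your proof is correct, and it is exactly the standard Bang/Ball potential-maximization argument that the paper cites from~\cite{Ball_1991} without reproducing: maximize $\Phi(\eps) = \ip{\eps}{M\eps} - 2\ip{c}{\eps}$ over the cube, flip one coordinate, and use $M_{jj}=1$ together with symmetry to extract $\eps_j^*\bigl((M\eps^*)_j - c_j\bigr)\geq 1$. The computation $\Phi(\eps') - \Phi(\eps^*) = 4\bigl(1 - \eps_j^*((M\eps^*)_j - c_j)\bigr)$ checks out, the $\theta=0$ case is handled correctly, and rescaling by $\theta>0$ recovers the stated form.
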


The proof of Bang's lemma is elegant and surprisingly short.
The lemma is relevant to the slicing problem because
it states the {\em existence} of a vector.
This vector is the starting point of our search.

But before we are able to use Bang's lemma
we need to understand where to focus our attention.
We identify a useful partition of the $k$ normal vectors
to two parts with different behaviors (Section~\ref{sec:decom}).
To utilize this partition,
we develop a couple of general anti-concentration results.
One strong anti-concentration result for vectors with many scales
(Section~\ref{sec:scales}).
The second set of results is general and 
extends several well-known theorems.

An antichain in $\{0,1\}^n$ 
is a family of sets with no pairwise strict inclusions.
Sperner's lemma provides a sharp upper bound on the size of antichains~\cite{sperner1928satz}.
The lemma states that if $A \subset \{0,1\}^n$ is an antichain
then 
$$|A| \leq \max_{\ell \in \{0,1,\ldots,n\}} {n \choose \ell}.$$
It is fundamental and has many applications 
(specifically, in the context of anti-concentration; see~\cite{erdos1945lemma}).
There are several extensions of Sperner's lemma
to various settings.
We need a version of the
lemma for product measures.

Let $P$ be a product distribution on $\{0,1\}^n$.
We say that $P$ is non trivial if $p_j(1-p_j) > 0$ for all $j \in [n]$,
where $p_j = \Pr_{z \sim P}[z_j = 1]$.
Aizenman et al.~\cite{aizenman2009bernoulli} proved 
a version of Sperner's lemma for product distributions.
They showed that the maximum measure of an antichain is
$\leq O(\tfrac{1}{\alpha \sqrt{n}})$ where
$\alpha = \min \{ p_1 ,\ldots,p_n , 1-p_1,\ldots,1-p_n\}$.
Their proof uses Bernoulli decompositions of random variables.
Their bound, however, is not strong enough for our purposes.
We obtain the sharpest possible bound,
and develop some theory in the process (see Section~\ref{sec:antiVer}).

\begin{theorem}
\label{thm:sperner}
For every non trivial product distribution $P$
on $\{0,1\}^n$, and for every antichain $A \subset \{0,1\}^n$,
\begin{align}
\label{eqn:Sper}\Pr_{z \sim P}[ z \in A] \leq \max_{\ell \in \{0,1,\ldots,n\}}
\Pr_{z \sim P}[|z|=\ell].
\end{align}
\end{theorem}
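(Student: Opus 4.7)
My plan is to reduce Theorem~\ref{thm:sperner} to the weighted LYM-type inequality
\[
\sum_{\ell=0}^n \frac{\Pr_{z \sim P}[z \in A,\, |z|=\ell]}{\Pr_{z \sim P}[|z|=\ell]} \;\leq\; 1.
\]
Once this is in hand the theorem follows at once by writing $\Pr_{z\sim P}[z \in A] = \sum_\ell \Pr_{z\sim P}[|z|=\ell]\cdot \big(\Pr[z \in A,|z|=\ell]/\Pr[|z|=\ell]\big)$ and pulling the factor $M := \max_\ell \Pr_{z\sim P}[|z|=\ell]$ out of the sum.

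\textbf{Random monotone chain.} To obtain the weighted LYM I would build a random maximal chain $\emptyset = S_0 \subsetneq S_1 \subsetneq \cdots \subsetneq S_n = [n]$ in $\{0,1\}^n$ (viewed as the power set of $[n]$) whose marginal at height $\ell$ equals the conditional distribution $P_\ell := P(\cdot\mid |z|=\ell)$. An antichain meets such a chain in at most one element, so $\sum_\ell 1_{S_\ell \in A} \leq 1$ almost surely, and taking expectations gives the weighted LYM. Such a chain exists iff $P_\ell \preceq_{\mathrm{st}} P_{\ell+1}$ in the subset order for every $\ell$ (by iteratively applying Strassen's theorem to splice the pairwise couplings into a single joint law), equivalently iff
\[
P_\ell(U) \;\leq\; P_{\ell+1}(U) \quad \text{for every up-set } U \subset \{0,1\}^n.
\]
This is a normalized-matching-type statement for the log-modular measure $P$.

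\textbf{Induction on $n$ and the main obstacle.} I would prove this monotonicity by induction on $n$. Splitting $U$ along the last coordinate into nested up-sets $U_0 \subset U_1$ in $\{0,1\}^{n-1}$ and letting $P'$ be the product distribution on $\{0,1\}^{n-1}$, one computes
\[
P_\ell(U) \;=\; \frac{(1-p_n)\,P'(U_0,\,|z'|=\ell) + p_n\,P'(U_1,\,|z'|=\ell-1)}{(1-p_n)\,P'(|z'|=\ell) + p_n\,P'(|z'|=\ell-1)}.
\]
Writing $u_\ell = P'_\ell(U_0)$ and $\tilde u_\ell = P'_\ell(U_1)$, the inductive hypothesis makes both sequences non-decreasing. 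The hard part will be the resulting weighted-mediant inequality: because $u_{\ell+1}$ and $\tilde u_\ell$ can be in either order, a naive convex-combination estimate fails. To push it through I would combine two structural facts: the containment $U_0 \subset U_1$ forces $u_\ell \leq \tilde u_\ell$, and the log-concavity of the Bernoulli-sum distribution $\ell \mapsto P'(|z'|=\ell)$ (Newton's inequalities for the elementary symmetric polynomials in $w_j = p_j/(1-p_j)$) controls how the weight between $u_\ell$ and $\tilde u_{\ell-1}$ in the mediant shifts with $\ell$. Together these let the good inductive increment $u_{\ell+1}-u_\ell$, which is at least $u_{\ell+1}-\tilde u_\ell$, absorb the single wrong-direction term arising when $u_{\ell+1}>\tilde u_\ell$ and close the induction.
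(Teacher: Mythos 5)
Your reduction is exactly the paper's: you pass from the theorem to the weighted LYM inequality $\sum_\ell \Pr[z\in A\mid |z|=\ell]\le 1$, and from there to the existence of a random maximal chain whose level-$\ell$ marginal is $P_\ell$. Both steps are correct (the splicing of pairwise monotone couplings into a Markov chain on levels is fine, and an antichain meets a maximal chain at most once). The divergence is how the chain is produced. The paper (Proposition~\ref{prop:Probconst}) writes down an \emph{explicit} transition kernel $Q(s,j)\propto q_j h_{s,j}$ built from the quantities $q_j=p_j/(1-p_j)$ and the elementary symmetric polynomials $g_\ell(q)$, and verifies by direct computation that both marginals come out right. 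You instead want to invoke Strassen's theorem and reduce to the stochastic dominance $P_\ell\preceq_{\mathrm{st}}P_{\ell+1}$ on up-sets, and prove that by induction on $n$. Conceptually this is a genuinely different (and arguably more illuminating) route; it identifies the underlying order-theoretic phenomenon rather than constructing the kernel.

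However, your proof of the dominance is not closed, and the gap is real, not cosmetic. Splitting on the last coordinate, cross-multiplying $P_\ell(U)\le P_{\ell+1}(U)$, and setting $a=(1-p_n)q_\ell$, $b=p_nq_{\ell-1}$, $c=(1-p_n)q_{\ell+1}$, $d=p_nq_\ell$ (with $q_\ell=P'[|z'|=\ell]$), one arrives, after using the inductive monotonicity of $u_\ell=P'_\ell(U_0)$ and $\tilde u_\ell=P'_\ell(U_1)$ and the containment $u_\ell\le\tilde u_\ell$, at a residual term of the form $(ad-bc)(\tilde u_{\ell-1}-u_\ell)$ (or, under the other natural regrouping, $(ad-bc)(\tilde u_\ell-u_{\ell+1})$). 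Log-concavity of $(q_\ell)$ indeed gives $ad\ge bc$, but the other factor can be negative: already for $U$ with $U_0=U_1$ (e.g.\ $U=\{(1,0),(1,1)\}$ with $n=2$) one has $\tilde u_{\ell-1}<u_\ell$. So the two ``structural facts'' you name do not suffice term-by-term, and the claimed absorption by the good increments is exactly the unverified step. You would need a sharper comparison between levels (not just monotonicity within $U_0$ and within $U_1$) or a different regrouping to make this work. Until that is done, the inductive step is a genuine hole; the paper sidesteps it entirely by exhibiting the coupling in closed form.
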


The Lubell–Yamamoto–Meshalkin inequality is stronger than
Sperner's lemma~\cite{yamamoto1954logarithmic,meshalkin1963generalization,lubell1966short}. It states that if $A \subset \{0,1\}^n$ is an antichain then
$$\sum_{\ell =0}^n \frac{|A_\ell|}{{n \choose \ell}} \leq 1,$$
where $A_\ell = \{ a \in A : |a|=\ell\}$.
The LYM inequality was also generalized in several ways (see e.g.~\cite{erdHos1992sharpening}),
and is sometimes more useful than Sperner's lemma (see e.g.~\cite{aizenman2009bernoulli}).
The following lemma is a strict generalization of the LYM inequality,
and puts the inequality in context.

\begin{theorem}
\label{thm:LYM}
For every non trivial product distribution $P$
on $\{0,1\}^n$, and for every antichain $A \subset \{0,1\}^n$,
$$\sum_{\ell=0}^n 
\Pr_{z \sim P}[z \in A||z|=\ell]\leq 1.$$
\end{theorem}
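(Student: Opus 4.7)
My plan is to deduce Theorem~\ref{thm:LYM} from Theorem~\ref{thm:sperner} via two intermediate steps: a weighted shadow inequality (a measure-theoretic ``normalized matching'' property), and then a chain-coupling argument.

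Write $r_j := p_j/(1-p_j)$, let $e_k := e_k(r_1,\ldots,r_n)$ be the elementary symmetric polynomial of degree $k$, and write $\mu_k(S) := \Pr_{z \sim P}[z=S \mid |z|=k]$. A direct calculation gives $\mu_k(S) = (\prod_{j \in S} r_j)/e_k$, which depends only on the ratios $r_j$, not on their overall scale. I will exploit this freedom by tilting: let $P_t$ denote the product distribution with parameters $p_j(t) := t r_j/(1+t r_j)$, so that every $P_t$ has the same conditional $\mu_k$ while the ratio of adjacent level probabilities becomes $\Pr_{P_t}[|z|=k+1]/\Pr_{P_t}[|z|=k] = t \cdot e_{k+1}/e_k$.

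Step 1 (shadow inequality). Fix $0 \le \ell < n$ and $F \subset \binom{[n]}{\ell}$ with upper shadow $\partial^+ F \subset \binom{[n]}{\ell+1}$. The family $B := F \cup \bigl(\binom{[n]}{\ell+1} \setminus \partial^+ F\bigr)$ is an antichain, since any $S \in F$ and $T \in \binom{[n]}{\ell+1}\setminus\partial^+ F$ satisfy $S \not\subset T$ by definition of the shadow. Set $t^\star := e_\ell/e_{\ell+1}$, which equates $\Pr_{P_{t^\star}}[|z|=\ell]$ and $\Pr_{P_{t^\star}}[|z|=\ell+1]$. By Newton's inequality for elementary symmetric polynomials, $e_{k+1}/e_k$ is non-increasing in $k$, so the level distribution of $P_{t^\star}$ is unimodal and its global maximum is attained exactly at these two tied values. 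Applying Theorem~\ref{thm:sperner} to $B$ under $P_{t^\star}$ then yields $\mu_\ell(F) + \bigl(1 - \mu_{\ell+1}(\partial^+ F)\bigr) \le 1$, i.e.\
\[ \mu_\ell(F) \le \mu_{\ell+1}(\partial^+ F) \quad \text{for every } F \subset \binom{[n]}{\ell}. \]

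Step 2 (chain coupling). The shadow inequality is precisely the Hall-type hypothesis in Strassen's theorem for the bipartite graph between $(\binom{[n]}{\ell},\mu_\ell)$ and $(\binom{[n]}{\ell+1},\mu_{\ell+1})$ with edges given by set containment, so Strassen supplies, for each $\ell$, a Markov kernel $K_\ell$ that transports $\mu_\ell$ to $\mu_{\ell+1}$ along supersets. Composing $K_0,\ldots,K_{n-1}$ starting from $Y_0 := \emptyset$ produces a random maximal chain $Y_0 \subsetneq Y_1 \subsetneq \cdots \subsetneq Y_n$ whose level-$\ell$ marginal is exactly $\mu_\ell$. Since $A$ is an antichain it meets this chain in at most one element, so
\[ \sum_{\ell=0}^n \Pr_{z \sim P}[z \in A \mid |z|=\ell] = \sum_{\ell=0}^n \Pr[Y_\ell \in A] = \E\,\bigl|\{\ell : Y_\ell \in A\}\bigr| \le 1. \]

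The main obstacle is Step~1. The key insight is that the tilt $t$ scales every level of $P_t$ without touching the conditionals $\mu_k$, so one can equalize two adjacent level probabilities; Newton's log-concavity of the $e_k$ then upgrades that local equality into a global maximum, which is exactly what lets Sperner's bound collapse into the one-step shadow inequality that the coupling argument demands.
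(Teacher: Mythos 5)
Your argument is logically sound \emph{given} Theorem~\ref{thm:sperner}: the tilt $P_t$ does preserve all conditional level distributions $\mu_k$; Newton's log-concavity of $e_k$ does make the equalized pair $\{\ell,\ell+1\}$ a global maximum of the level distribution; applying Sperner to the antichain $B$ does yield the shadow inequality $\mu_\ell(F)\le\mu_{\ell+1}(\partial^+F)$; Strassen (the finite bipartite marriage theorem for measures) does produce a kernel $K_\ell$ whose composition gives a random maximal chain with level-$\ell$ marginal $\mu_\ell$; and the chain-meets-antichain-at-most-once computation is exactly the right closing move. This is, incidentally, a nice proof that Sperner $\Rightarrow$ LYM for general product measures.

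The gap is circularity. In the paper, Theorem~\ref{thm:sperner} is itself a one-line consequence of Theorem~\ref{thm:LYM}, so you may not use it as an input to proving Theorem~\ref{thm:LYM} unless you also supply an independent proof of the \emph{sharp} product-measure Sperner bound. Your Step~1 needs the constant exactly $1$ (the inequality $\Pr[z\in B]\le\max_k\Pr[|z|=k]$ with no slack); a weaker estimate such as the Aizenman et al. $O(1/(\alpha\sqrt{n}))$ bound would only give $\mu_\ell(F)-\mu_{\ell+1}(\partial^+F)\le C-1>0$, which is not the shadow inequality. The paper avoids this by going the other way: the explicit transition probabilities built in Proposition~\ref{prop:Probconst} (the kernel $Q(s,j)=q_j h_{s,j}/g_{\ell+1}(q)$ with marginal checks) are precisely a constructive flow certifying the shadow inequality / normalized matching property, from which the chain coupling (Lemma~\ref{lem:distChain}) and then LYM follow by your Step~2 argument, and Sperner drops out at the end. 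To fix your proof, replace Step~1 by a direct argument for the shadow inequality (e.g.\ the paper's explicit flow, or any route not through Sperner); your Step~2 is fine as stated.
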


What underlying property of product measures allows to control
the measure of antichains?
We identify the following mechanism.
There is a way to sample a full chain
in a way that respects the measure.

\begin{lemma}
\label{lem:distChain}
Let $P$ be a non trivial product distribution $P$
on $\{0,1\}^n$.
For $\ell \in \{0,1,\ldots,n\}$,
let $P_\ell$ be the distribution of $z \sim P$ conditioned on the event that
$|z| = \ell$.
Then, there is a distribution on maximal chains
$$\emptyset = c_0 \subset c_1 \subset \ldots \subset c_n = [n]$$
so that for every $\ell \in \{0,1,\ldots,n\}$,
the set $c_\ell$ is distributed according to $P_\ell$.
\end{lemma}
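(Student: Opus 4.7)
My plan is to construct the desired chain distribution by induction on $n$. The base case $n=1$ is immediate, since $\{0,1\}^1$ admits a unique maximal chain and the marginals are forced. For the inductive step, the idea is to take a random maximal chain in $\{0,1\}^{n-1}$ furnished by the inductive hypothesis and insert the new coordinate $n$ into it at a carefully chosen random position.

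Concretely, I would write $\lambda_j = p_j/(1-p_j) > 0$ so that
\[
P_\ell(x) \;=\; \frac{\prod_{j \in x} \lambda_j}{e_\ell(\lambda)},
\]
where $e_\ell$ denotes the $\ell$-th elementary symmetric polynomial in $(\lambda_1,\ldots,\lambda_n)$. Let $P'$ be the product distribution on $\{0,1\}^{n-1}$ with marginals $p_1,\ldots,p_{n-1}$, and let $c'_0 \subset c'_1 \subset \cdots \subset c'_{n-1}$ be a random maximal chain, supplied by the inductive hypothesis, with $c'_\ell \sim P'_\ell$ for every $\ell$. Define
\[
\alpha_\ell \;:=\; \frac{e_\ell(\lambda_1,\ldots,\lambda_{n-1})}{e_\ell(\lambda)} \;=\; \Pr_{z\sim P_\ell}[z_n=0].
\]
Independently of the inductive chain, I would sample a position $i \in \{1,\ldots,n\}$ with $\Pr[i=\ell] = \alpha_{\ell-1} - \alpha_\ell$, and then build the new chain by setting $c_\ell = c'_\ell$ for $\ell < i$ and $c_\ell = c'_{\ell-1}\cup\{n\}$ for $\ell \geq i$. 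Verifying $c_\ell \sim P_\ell$ is then a direct conditioning on whether $n \in c_\ell$: the event $n\in c_\ell$ has probability $\Pr[i\leq\ell] = 1-\alpha_\ell = \Pr_{z\sim P_\ell}[z_n=1]$; and because $P$ is a product measure, conditioning $P_\ell$ on the value of $z_n$ yields $P'_\ell$ or $P'_{\ell-1}$ on the first $n-1$ coordinates, which is precisely the marginal distribution of $c'_\ell$ or $c'_{\ell-1}$ coming from the inductive hypothesis.

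The main obstacle is to show that the insertion distribution for $i$ is well-defined, i.e., that $\alpha_\ell$ is non-increasing in $\ell$. Writing $\lambda_{[n-1]} = (\lambda_1,\ldots,\lambda_{n-1})$ and using the identity $e_\ell(\lambda) = e_\ell(\lambda_{[n-1]}) + \lambda_n\, e_{\ell-1}(\lambda_{[n-1]})$, clearing denominators reduces the inequality $\alpha_\ell \geq \alpha_{\ell+1}$ to
\[
e_\ell(\lambda_{[n-1]})^2 \;\geq\; e_{\ell-1}(\lambda_{[n-1]})\cdot e_{\ell+1}(\lambda_{[n-1]}),
\]
which is Newton's classical log-concavity inequality for elementary symmetric polynomials of positive reals. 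Together with the boundary values $\alpha_0 = 1$ and $\alpha_n = 0$, this produces a bona fide probability distribution on $i$ and completes the inductive step.
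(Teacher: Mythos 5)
Your proof is correct, but it takes a genuinely different route from the paper. The paper proves Lemma~\ref{lem:distChain} by first establishing Proposition~\ref{prop:Probconst}, which directly constructs a Markov transition kernel from level $\ell$ to level $\ell+1$: the probability of stepping from $s$ to $s\cup\{j\}$ is written down explicitly in terms of the symmetric polynomials $g_\ell$ and the auxiliary sums $h_{s,j}$, and the correctness of both marginals is verified by a somewhat delicate algebraic computation. The full chain is then obtained by iterating this kernel starting from $\emptyset$. You instead induct on the dimension $n$, reusing the random chain for the restricted product measure $P'$ on $\{0,1\}^{n-1}$ and inserting coordinate $n$ at an independently sampled random position $i$; the marginal $c_\ell$ then splits correctly according to whether $n\in c_\ell$, precisely because conditioning $P_\ell$ on $z_n=0$ (resp.\ $z_n=1$) yields $P'_\ell$ (resp.\ $P'_{\ell-1}$). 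The one nontrivial ingredient is the monotonicity $\alpha_0=1\geq\alpha_1\geq\cdots\geq\alpha_n=0$ of the probabilities $\alpha_\ell=\Pr_{z\sim P_\ell}[z_n=0]$, which you correctly reduce, via the Pascal recurrence $e_\ell(\lambda)=e_\ell(\lambda_{[n-1]})+\lambda_n e_{\ell-1}(\lambda_{[n-1]})$, to the log-concavity $e_\ell^2\geq e_{\ell-1}e_{\ell+1}$ of elementary symmetric polynomials of positive reals (Newton's inequality). Your argument is conceptually cleaner and shorter, and it is entirely self-contained modulo that classical inequality. What the paper's approach buys, and yours does not, is the explicit level-to-level transition kernel $Q(s,j)$: that kernel, and in particular the reversibility and the two-sided bounds on $(n_*-|s|)Q(s,j)$, are used heavily later in the proof of Lemma~\ref{lem:C+} inside Theorem~\ref{thm:baker}. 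So your proof is a perfectly good substitute for Lemma~\ref{lem:distChain} as stated, but it would not let the paper dispense with Proposition~\ref{prop:Probconst}.
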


To make use of Theorem~\ref{thm:sperner},
we need to control the right hand side of~\eqref{eqn:Sper}.
Namely, we need to prove an anti-concentration result
for general product measures.
This is done in Section~\ref{sec:prodMeasure}.

\begin{theorem}
\label{thm:anticon}
There is a constant $C_1>0$ so that the following holds.
Let $P$ be a product distribution on $\{0,1\}^n$.
Denote the variance of $|z|$ for $z \sim P$ by
$\sigma^2_P = \sum_j p_j(1-p_j)$.
Then, 
$$\max_{\ell \in \{0,1,\ldots,n\}} \Pr_{z \sim P} [ |z|= \ell ] \leq \frac{C_1}{\sigma_P}.$$
\end{theorem}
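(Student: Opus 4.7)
The plan is to treat this as a local central limit theorem for the integer-valued random variable $|z| = \sum_j z_j$, where the $z_j$ are independent Bernoulli with parameters $p_j$, and to prove it by Fourier inversion on $\Z$. The characteristic function of $|z|$ is
\[
\phi(t) = \ex\bigl[e^{it|z|}\bigr] = \prod_{j=1}^n \bigl(1-p_j + p_j e^{it}\bigr),
\]
and the inversion formula gives
\[
\Pr[|z|=\ell] = \frac{1}{2\pi}\int_{-\pi}^\pi e^{-i\ell t}\,\phi(t)\,dt,
\]
so uniformly in $\ell$ it suffices to bound $\int_{-\pi}^\pi |\phi(t)|\,dt$.

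The key step is a clean pointwise estimate on $|\phi(t)|$. A direct expansion gives
\[
|1-p_j+p_j e^{it}|^2 = 1 - 2\,p_j(1-p_j)(1-\cos t),
\]
and then applying $1-x \le e^{-x}$ coordinate by coordinate and taking the product yields
\[
|\phi(t)| \;\le\; \exp\!\bigl(-\sigma_P^2\,(1-\cos t)\bigr).
\]
This is the place where the variance $\sigma_P^2 = \sum_j p_j(1-p_j)$ enters naturally as the right scale.

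From this point the argument is routine. Using the elementary inequality $1-\cos t \ge 2t^2/\pi^2$ on $[-\pi,\pi]$, I would compare to a Gaussian:
\[
\int_{-\pi}^\pi |\phi(t)|\,dt \;\le\; \int_{-\infty}^\infty \exp\!\bigl(-2\sigma_P^2 t^2/\pi^2\bigr)\,dt \;=\; O(1/\sigma_P),
\]
which gives $\Pr[|z|=\ell] \le O(1/\sigma_P)$ as required. The small-variance regime $\sigma_P \le 1$ is handled separately by the trivial bound $\Pr[|z|=\ell]\le 1$, absorbed by taking $C_1$ sufficiently large.

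I do not anticipate a serious obstacle. The characteristic-function bound is essentially one line, and the Gaussian comparison is classical. The only point requiring a little care is that $|\phi(t)| \le \exp(-\sigma_P^2(1-\cos t))$ is the correct scale even when many $p_j$ are close to $0$ or $1$ (so that $\sigma_P^2 \ll n$); but this is automatic because the factor $p_j(1-p_j)$ in the definition of $\sigma_P^2$ already down-weights near-deterministic coordinates, so no separate case analysis on the $p_j$ is needed.
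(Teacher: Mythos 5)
Your proof is correct and is essentially the paper's proof: the paper also applies Fourier inversion for the lattice random variable $|z|$, derives the same pointwise bound $|\phi|^2 = \prod_j\bigl(1-2p_j(1-p_j)(1-\cos t)\bigr)$, applies $1-x\le e^{-x}$, and finishes with a Gaussian comparison. The only differences are cosmetic (the paper uses $\{\pm1\}$-valued coordinates and the substitution $t=2\pi\theta$, and writes $1-\cos(2\pi\theta)=2\sin^2(\pi\theta)$ before applying $\sin\xi\ge\xi/2$ in place of your $1-\cos t\ge 2t^2/\pi^2$).
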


What we really care about is not antichains of vertices
but antichains of edges,
because we are interested in the edges that are sliced by a hyperplane.
Edge antichains are somehow more complicated than vertex antichains.
A specific difficulty we need to deal with is orientation.
In the hypercube with the uniform distribution,
all directions ``look the same''.
For general product distributions, this is no longer true.
There are special directions.
This becomes problematic now
because by choosing an edge we must commit to some direction.
The chosen direction might not agree with the (unknown) orientation.
All this is explained in Section~\ref{sec:antiedges}.

After discovering all these general results,
we identified a second approach for proving what we need
about antichains.
We can replace {\em general} oriented antichains of edges
by antichains of edges of a specific type.
Antichains that are generated
by oriented monotone Boolean functions.
This naturally leads us to 
Fourier analysis over the hypercube; see e.g.~\cite{kahn1989influence,bshouty1996fourier,o2014analysis}
and references within.
For the details, see Section~\ref{sec:monotone}.

\begin{remark}
To summarize,
there are two avenues for proving the ``antichain part'' of our main result.
One wide avenue that extends Sperner's lemma
and the LYM inequality (Sections~\ref{sec:antiVer}, \ref{sec:prodMeasure}
and~\ref{sec:antiedges}).
The second avenue is more specific, quicker and relies on Fourier analysis
(Section~\ref{sec:monotone}).
\end{remark}

\section{Antichains of vertices}
\label{sec:antiVer}

Here we extend Sperner's lemma and the LYM inequality to
general product measures.
The main proposition is natural,
but the proof we found is technical.
Recall that $P_\ell$ denotes the distribution of
$z \sim P$ conditioned on $|z|=\ell$.

\begin{prop}
\label{prop:Probconst}
Let $P$ be a non trivial product distribution on $\{0,1\}^n$.
Let $\ell \in \{0,1,\ldots,n-1\}$.
Then, there is a probability distribution on pairs $
(c_\ell,c_{\ell+1}) \in {[n] \choose \ell} \times 
{[n] \choose \ell+1}$ so that the following hold:
\begin{enumerate}
\item $c_\ell \subset c_{\ell+1}$ almost surely.
\item $c_\ell$ is distributed like $P_\ell$.
\item $c_{\ell+1}$ is distributed like $P_{\ell+1}$.
\end{enumerate}

\end{prop}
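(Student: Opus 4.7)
The plan is to induct on $n$. The base case $n = 1$ is immediate: the only option is $\ell = 0$, and setting $c_0 = \emptyset$, $c_1 = \{1\}$ with probability one satisfies all three required conditions.

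For the inductive step, let $P'$ denote the product distribution on $\{0,1\}^{n-1}$ obtained by forgetting the last coordinate, let $a_k = \Pr_{z \sim P'}[|z| = k]$, and let $(P')_k$ be the conditional of $P'$ given $|z| = k$. By the induction hypothesis, for each relevant $k$ we have a coupling $\phi_k$ of $(P')_k$ and $(P')_{k+1}$ supported on pairs $(c', d')$ with $c' \subset d' \subset [n-1]$. I build the required coupling of $P_\ell$ and $P_{\ell+1}$ as a mixture of three atoms, indexed by how the coordinate $n$ appears in $(c_\ell, c_{\ell+1})$: with weight $A_0$, draw $(c_\ell, c_{\ell+1})$ from $\phi_\ell$; with weight $A_1$, draw $c_\ell \sim (P')_\ell$ and set $c_{\ell+1} = c_\ell \cup \{n\}$; and with weight $A_2$, draw $(c', d')$ from $\phi_{\ell-1}$ and set $c_\ell = c' \cup \{n\}$, $c_{\ell+1} = d' \cup \{n\}$. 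In each case the containment $c_\ell \subset c_{\ell+1}$ holds automatically.

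Matching the resulting marginals against $P_\ell$ and $P_{\ell+1}$, split according to whether $z_n = 0$ or $z_n = 1$, forces $A_0 = \Pr_P[z_n = 0 \mid |z| = \ell+1]$, $A_2 = \Pr_P[z_n = 1 \mid |z| = \ell]$, and $A_1 = 1 - A_0 - A_2$; the remaining consistency relations then hold automatically. The whole construction is legitimate provided $A_1 \geq 0$. Writing $\alpha_k = \Pr_P[z_n = 0 \mid |z| = k]$, a short calculation (clearing denominators in $\alpha_k = (1-p_n) a_k / ((1-p_n) a_k + p_n a_{k-1})$) shows that $A_1 = \alpha_\ell - \alpha_{\ell+1}$ is a positive multiple of $a_\ell^2 - a_{\ell-1} a_{\ell+1}$, so the proposition reduces to log-concavity of the sequence $(a_k)$. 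This is a classical consequence of Newton's inequalities: $\sum_k a_k t^k = \prod_{j=1}^{n-1}((1-p_j) + p_j t)$ has only real roots, so its coefficient sequence is log-concave. The boundary values $\ell = 0$ and $\ell = n-1$ degenerate to $A_2 = 0$ and $A_0 = 0$ respectively, so no appeal to undefined couplings $\phi_{-1}$ or $\phi_{n-1}$ is needed. The main obstacle is the bookkeeping in isolating the three weights $A_0, A_1, A_2$; once that is done, the required $A_1 \geq 0$ is exactly Newton's inequality for the Poisson-binomial distribution.
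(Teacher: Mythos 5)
Your proof is correct, but it takes a genuinely different route from the paper's. The paper writes down the transition kernel in closed form: using $q_j = p_j/(1-p_j)$ and the elementary symmetric polynomials $g_\ell(q)$, it defines an explicit transition probability $Q(s,j) = q_j h_{s,j}/g_{\ell+1}(q)$ and verifies by direct computation that both marginals come out right. You instead induct on $n$, splitting on the last coordinate, and observe that the one nonnegativity constraint needed for the mixture weights ($A_1 = \alpha_\ell - \alpha_{\ell+1} \geq 0$) is equivalent to log-concavity of the Poisson-binomial weights $a_k$, which follows from Newton's inequalities since $\sum_k a_k t^k$ is real-rooted. Both arguments are sound; I checked that your four marginal constraints are mutually consistent, that they pin down $(A_0,A_1,A_2)$ uniquely, and that the boundary cases $\ell \in \{0, n-1\}$ indeed make the offending $\phi_{-1}$ or $\phi_{n-1}$ vanish. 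Your route is conceptually cleaner and pinpoints the real reason the coupling exists (ultra-log-concavity of the slice sizes). What the paper's explicit construction buys, however, is control that your argument does not provide: the proof of Lemma~\ref{lem:C+} later bounds ratios $Q(s,j)/Q(s,j')$ using the closed-form expression for $Q$, so the paper needs this particular coupling, not merely the existence of one. So as a proof of Proposition~\ref{prop:Probconst} in isolation your argument is complete, but it would not plug into the downstream argument without reproducing the explicit kernel.
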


\begin{proof}
Choose $c_\ell$ according to $P_\ell$.
For $s \subset [n]$ of size $|s|=\ell$ and $j \not \in s$,
we need to decide what is the probability of $c_{\ell+1} = s \cup \{j\}$ conditioned on $c_\ell = s$.

Let $q \in \R^n$ be so that for all $j \in [n]$,
$$q_j = \frac{p_j}{1-p_j}>0.$$
Consider the symmetric polynomial
$$g_\ell(q) = \sum_{t \subseteq [n] : |t|=\ell} \prod_{j \in t} q_j ,$$
where $g_0(q)=1$.
For $s \subset [n]$ of size $\ell$,
\begin{align*}
\Pr[z=s] 
& = \prod_{a \in s} p_a \cdot \prod_{j \not \in s} (1-p_j) 
 = \prod_{j \in [n]} (1-p_j) \cdot \prod_{a \in s} q_a .
\end{align*}
It follows that
\begin{align*}
P_\ell(s) = \Pr_{z \sim P}[z=s||z|=\ell]
& = \frac{\prod_{a \in s} q_a}{g_\ell(q)}.
\end{align*}
Let  
$$h_{s,j}= \sum_{t \subset [n] : |t|=\ell, j \not \in t} \frac{1}{|(s \cup \{j\}) \setminus t|} \prod_{a \in t} q_a >0.$$
When $\ell=0$ set $h_{s,j} = 1$.
For every $s$,
\begin{align*}
\sum_{j \not \in s} q_j h_{s,j}
& = \sum_{j \not \in s} q_j\sum_{t: |t|=\ell, j \not \in t} \frac{1}{|(s \cup \{j\}) \setminus t|} \prod_{a \in t} q_a \\
& = \sum_{t: |t|=\ell} \sum_{j \not \in s \cup t}  \frac{1}{|(s \cup \{j\}) \setminus t|} \prod_{a \in t \cup \{j\}} q_a \\
& = \sum_{t: |t|=\ell} \sum_{j \not \in s \cup t}  \frac{1}{1+|s \setminus t|} \prod_{a \in t \cup \{j\}} q_a \\
& = \sum_{t': |t'|=\ell+1} \prod_{a \in t'} q_a \sum_{j \in t' , j \not \in s}  \frac{1}{1+|s \setminus t'|}  \tag{$t' = t \cup \{j\}$} .
\end{align*}
Because $|s|=\ell$ and $|t'|=\ell+1$,
$$|t' \setminus s|+|t' \cap s|=\ell+1 =
 |s \setminus t'| + |s \cap t'|+1.$$
We can conclude
\begin{align*}
\sum_{j \not \in s} q_j h_{s,j}
& = \sum_{t': |t'|=\ell+1} \prod_{a \in t'} q_a \sum_{j \in t' , j \not \in s}  \frac{1}{|t' \setminus s|} = g_{\ell+1}(q) .
\end{align*}
This is also true for $\ell=0$.

Finally, define 
$$\Pr[c_{\ell+1} = s \cup \{j\} | c_\ell = s] = \frac{q_j h_{s, j}}{g_{\ell+1}(q)}.$$
We need to show that $c_{\ell+1}$ is properly distributed.
For fixed $s'$ of size $\ell+1$,
\begin{align*}
\sum_{j \in s'} h_{s' \setminus \{j\},j}
& = \sum_{j \in s'} \sum_{t: |t|=\ell, j \not \in t} \frac{1}{|s' \setminus t|} \prod_{a \in t} q_a \\
& =  \sum_{t: |t|=\ell} \prod_{a \in t} q_a \sum_{j \in s' \setminus t} \frac{1}{|s' \setminus t|}   = g_\ell(q) .
\end{align*}
This is also true for $\ell=0$.
So,
\begin{align*}
\Pr[c_{\ell+1}=s']
& = \sum_{j \in  s'} \Pr[c_\ell = s' \setminus \{j\}] 
\frac{q_j \cdot h_{s' \setminus \{j\},j} }{g_{\ell+1}(q)} \\
& = \frac{\prod_{a \in s'} q_a}{g_\ell(q){g_{\ell+1}(q)} } \sum_{j \in  s'}
h_{s' \setminus \{j\},j} = P_{\ell+1}( s') .
\end{align*}
\end{proof}

\begin{proof}[Proof of Lemma~\ref{lem:distChain}]
The set $c_0$ is fixed to be empty.
For $\ell < n$, define $c_{\ell+1}$ from $c_{\ell}$
via Proposition~\ref{prop:Probconst}. 
\end{proof}

\begin{proof}[Proof of Theorem~\ref{thm:LYM}]
Let $C = \{c_0 , c_1,\ldots, c_n\}$ be a random maximal chain
as in Lemma~\ref{lem:distChain}.
Let $L$ be the number of $\ell \in \{0,1,\ldots,n\}$ so that $c_\ell \in A$.
Because $A$ is an antichain, almost surely $L \leq 1$.
On the other hand, 
\begin{align}
\label{eqn:LYMp}
\E L 
 & = \sum_\ell \sum_{a \in A:|a|=\ell} \Pr[c_\ell=a] \\
\notag & = \sum_\ell \sum_{a \in A:|a|=\ell} \Pr\big[z=a\big| |z|=\ell \big] \\
\notag & = \sum_\ell \Pr[z \in A||z|=\ell].
\end{align}

\end{proof}

\begin{proof}[Proof of Theorem~\ref{thm:sperner}]
By Theorem~\ref{thm:LYM},
\begin{align*}
\Pr[z \in A] 
& = \sum_{\ell} \Pr[|z|=\ell] \Pr[z \in A|  |z|=\ell] \leq \max_\ell \Pr[|z|=\ell]. 
\end{align*}
\end{proof}

\section{Anti-concentration}
\label{sec:prodMeasure}

Here we prove a general anti-concentration
result for product measures.
The simple proof is inspired by~\cite{rao2018anti}.

\begin{proof}[Proof of Theorem~\ref{thm:anticon}]
Think of $z$ as taking values in $\{\pm 1\}^n$;
this just simplifies the calculations. 
Let $\theta$ be uniformly distributed in $[0,1]$.
For every integer $t$,
\begin{align*}
\Pr\Big[\sum_{j=1}^n z_j =t\Big]
& = \E_\theta \E_z \exp \Big( 2 \pi i \theta \Big(\sum_j z_j-t\Big)\Big) \\
& \leq \E_\theta \Big| \prod_j \E_{z_j} \exp(2 \pi i \theta z_j) \Big| .
\end{align*}
For each $j$,
$\E_{z_j} \exp({2 \pi i \theta z_j})
= p_j \exp({2\pi i \theta})+(1-p_j) \exp(-2 \pi i \theta)$. 
So,
\begin{align*}
|\E_{z_j} \exp(2 \pi i \theta z_j)|^2
& = p_j^2 + (1-p_j)^2 + 2 p_j (1-p_j) \cos(2 \pi \theta) \\ 
& = 1 - 2 p_j (1-p_j) (1-\cos(2 \pi \theta)) \\
& = 1 - 4 p_j (1-p_j) \sin^2( \pi \theta) \\
& \leq \exp(-4 p_j (1-p_j) \sin^2( \pi \theta) ).
\end{align*}
It follows that 
\begin{align*}
\Pr\Big[\sum_{j=1}^n z_j =t\Big]
& \leq \E_\theta \exp (-2 \sigma^2_P \sin^2( \pi \theta)  ) \\
& = 2\int_0^{1/2}   \exp (-2 \sigma^2_P \sin^2( \pi \theta)) d \theta \\
& \leq \int_{-\infty}^{\infty}   \exp (-  \sigma^2_P  \theta^2 ) d \theta 
\tag{$\sin(\xi) \geq \xi/2$} \leq \frac{C_1}{\sigma_P}.
\end{align*}

\end{proof}

\section{Antichains of edges}
\label{sec:antiedges}

The following section explains how to bound from above the measure
of {\em oriented} antichains of {\em edges} in general product distributions. 
Oriented antichains of edges are more difficult to handle than
antichains of vertices. 
The arguments in this section are, consequently, more technical 
and less accurate than in the previous sections. 
Nevertheless, the arguments are inspired by
and rely on Section~\ref{sec:antiVer}.

The cube can be oriented so that $u \in \{0,1\}^n$ is its new origin.
This orientation allows to stratify the cube according to 
the Hamming distance from $u$.
A $u$-chain of edges is a set 
$C=\{(c_0,c_1),(c_1,c_2),\ldots,(c_{n-1},c_n)\}$
of edges in the cube so that the Hamming distance of $c_\ell$ from 
$u$ is $\ell$, for all $\ell\in \{0,1,\ldots,n\}$.
A $u$-antichain of edges is a set of edges $A$ so that
every $u$-chain of edges intersects $A$ in at most a single edge.

The set of edges $A_h$ that are sliced by a hyperplane $h$ is an oriented
antichain of edges~\cite{O_Neil_1971}.
The origin $u$ is defined by the sign pattern 
of the normal vector $v$ via $u_j = 1$ iff $v_j > 0$.
The set $A_h$ is a $u$-antichain of edges.

\begin{theorem}
\label{thm:baker}
There is a constant $C_2>0$ so that the following holds.
Let $P$ be a non trivial product distribution on $\{0,1\}^n$.
There is a distribution on edges $[x,y]$ in the cube
so that $x \sim P$ with the following property.
Let $u \in \{0,1\}^n$ and let $A$ be a $u$-antichain of edges.
Then,
$$\Pr[ [x,y] \in A] \leq \frac{C_2 n^4}{\sigma^9_P}.$$
\end{theorem}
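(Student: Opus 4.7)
The plan is to let $D$ be the symmetric distribution on edges defined by first sampling $X \sim P$ and an index $J \in [n]$ uniformly and independently, and then setting $Y = X \oplus e_J$.
This manifestly has $X \sim P$, and is invariant under coordinate relabeling by any $u \in \{0,1\}^n$: reorienting by $u$ carries $D$ to the analogous distribution with $P$ replaced by the reoriented product $P^u$ (where $p_j^u = p_j$ when $u_j = 0$ and $p_j^u = 1-p_j$ otherwise), and $\sigma_{P^u} = \sigma_P$.
Under this reorientation a $u$-antichain of edges becomes a \emph{standard} antichain of edges, so it suffices to prove the following uniform statement: for every product distribution $Q$ on $\{0,1\}^n$ and every standard antichain of edges $A$, sampling $X \sim Q$, $J$ uniform in $[n]$, and $Y = X \oplus e_J$ yields $\Pr[[X,Y] \in A] \leq C_2 n^4/\sigma_Q^9$.

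The key tool I would use is the chain-based edge sampler coming from Lemma~\ref{lem:distChain}: sample a maximal chain $c_0 \subset c_1 \subset \cdots \subset c_n$ in $\{0,1\}^n$ with $c_\ell \sim Q_\ell$, pick a uniform layer $L$, and output $[c_L, c_{L+1}]$.
Because $A$ is a standard antichain of edges, at most one $\ell$ has $(c_\ell, c_{\ell+1}) \in A$, so this chain-based distribution $D_{\mathrm{chain}}$ hits $A$ with probability at most $1/n$.
To transfer this to $D$, I would expand $\Pr_D[[X,Y] \in A] = \tfrac{1}{n}\sum_{(x, y) \in A}(Q(x) + Q(y))$ and compare termwise with the chain-based probability, which by Proposition~\ref{prop:Probconst} assigns the edge $(x, x \cup \{j\})$ at level $\ell$ the mass $Q(x) \cdot q_j h_{x, j}/g_{\ell+1}(q)$ (here $q_a = p_a^Q/(1-p_a^Q)$).

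The main obstacle is bounding the resulting density ratio
$\frac{dD}{dD_{\mathrm{chain}}} = \tfrac{(1+q_j)\, g_{\ell+1}(q)}{n\, q_j\, h_{x, j}}$
in a sufficiently strong sense.
The polynomials $g_\ell(q)$ and $h_{x, j}$ are intricate rational functions of the parameters $(p_j^Q)_j$, and an individual $h_{x, j}$ can be small when $x$ is poorly aligned with the support of $q$.
My plan is to split $A$ by layer $\ell$ and by flipping direction $j$, and in each piece leverage the identities $\sum_{j \notin s} q_j h_{s, j} = g_{\ell+1}(q)$ and $\sum_{j \in s'} h_{s' \setminus \{j\}, j} = g_\ell(q)$ from the proof of Proposition~\ref{prop:Probconst}, together with the anti-concentration estimate $\max_\ell \Pr_Q[|Z|=\ell] \leq C_1/\sigma_Q$ from Theorem~\ref{thm:anticon}, in order to keep the ratios $g_\ell(q)/g_{\ell+1}(q)$ and their siblings under control.
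Each nested application of Theorem~\ref{thm:anticon} contributes a factor of $1/\sigma_Q$, and each union bound over layers or flipping directions contributes a factor of $n$; these crude losses accumulate to the stated $C_2 n^4/\sigma_P^9$ bound, with the exponent $9$ on $\sigma$ reflecting the several nested anti-concentration steps needed to bridge an orientation-invariant edge distribution with an orientation-specific chain structure.
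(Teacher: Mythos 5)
Your approach is in the right spirit (leverage the chain sampler of Proposition~\ref{prop:Probconst} against the antichain structure), and the reorientation step is handled correctly, but it omits the ingredient the paper introduces precisely to make the density-ratio comparison possible: the restriction of the flipping direction to the well-balanced coordinates $J_* = \{j : p_j(1-p_j) \geq \sigma_P^2/(2n)\}$. You sample $J$ uniformly over all of $[n]$; the paper instead samples $y$ uniformly among neighbours of $x$ differing only in a coordinate of $J_*$. Without this restriction the comparison between your $D$ and the chain-based distribution fails, even termwise, because the chain transition weight is $Q(s,j) \propto q_j h_{s,j}$ and when $q_j = p_j/(1-p_j)$ is extreme, the chain flips $j$ at an essentially deterministic level. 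For a concrete failure: take $p_1 = 2^{-n}$ and $p_j = 1/2$ for $j \geq 2$ (so $\sigma_P = \Theta(\sqrt n)$), and let $A$ consist of the edges $[s, s\cup\{1\}]$ with $|s| = (n-1)/2$; this is a legal $\emptyset$-antichain of edges since every maximal chain flips coordinate~$1$ exactly once. Then $\Pr_D[A] = \Theta(n^{-3/2})$, but the chain of Proposition~\ref{prop:Probconst} flips coordinate~$1$ last with probability $1 - 2^{-\Omega(n)}$, so $\Pr_{D_{\mathrm{chain}}}[A]$ is exponentially small. The density ratio at the relevant edges is therefore exponential, and the comparison gives nothing. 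The identities $\sum_{j\notin s} q_j h_{s,j} = g_{\ell+1}(q)$ and $\sum_{j\in s'} h_{s'\setminus j,j} = g_\ell(q)$, together with Theorem~\ref{thm:anticon}, control sums over $j$ or over levels, not the individual transitions $Q(s,j)$, which is exactly what a termwise comparison needs.

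Once the flipping direction is confined to $J_*$, the ratio $q_j/q_{j'}$ is bounded by $1/p_*^2 = 4n^2/\sigma_P^4$ for $j,j' \in J_*$, which is what yields the key two-sided estimate $p_*^2 \leq (n_*-|s|)Q(s,j) \leq 1/p_*^2$ used in the paper's Lemma~\ref{lem:C+}; one also needs Equation~\eqref{eqn:sigmaP}, that $J_*$ retains at least half the variance, so Theorem~\ref{thm:anticon} remains useful after restriction. With $J_*$ in place, a density-ratio comparison of the kind you propose is plausible and is morally what the paper's reversibility argument accomplishes, though the paper routes through the conditional law of $x$ given the chain ($C_+$ and $C_-$, to handle the unknown sign of $|x| - |y|$ under the unknown orientation) rather than a direct Radon--Nikodym computation against a uniform-layer sampler. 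Without the $J_*$ step, though, the proposal has a genuine gap.
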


\begin{remark}
It is important that the distribution on $[x,y]$
does not depend on $u$.
\end{remark}

\begin{proof}
We start by partitioning $[n]$ according to $P$.
For $j \in [n]$, let
$$p_j = \Pr_{z \sim P}[z_j=1].$$
Let 
$$J_* = \{j \in [n] : p_j(1-p_j) \geq p_*\},$$
where
$$p_* = \frac{\sigma_P^2}{2n}.$$
Let $n_* = |J_*|$ and let
$P_*$ be the restriction of $P$ to coordinates in $J_*$.
Because $\sigma_P^2 = \sum_{j\in [n]} p_j(1-p_j)$,
we know that 
\begin{align}
\label{eqn:sigmaP}
\sigma^2_{P_*} = \sum_{j \in J_*} p_j(1-p_j) \geq \frac{\sigma_P^2}{2}.
\end{align}

The distribution on the edge $[x,y]$ is defined as follows.
Let $x \sim P$.
Denote by $N_*(x)$ the set of neighbors of $x$
that differ from $x$ only in coordinates in $J_*$.
Let $y$ be chosen uniformly at random in $N_*(x)$.
The vertices $x,y$ agree in all coordinates not in $J_*$.
For the rest of the proof, we ignore coordinates not in $J_*$,
and think about them as fixed.

The orientation of the cube is determined by $u \in \{0,1\}^n$.
We claim that we can assume without loss of generality that $u=0$.
Why?
Because we can replace $P$ by 
the product distribution $P_u$ on $\{0,1\}^n$
so that $z+u \sim P_u$ where $z \sim P$.
And we can replace the $u$-antichain $A$
by the $\emptyset$-antichain $A+u$,
obtained by replacing each edge $[a,b]$ in $A$
by the edge $[a+u,b+u]$.
The distribution $P_u$ yields exactly the same set $J_*$
as $P$ does. Consequently,
$x+u$ is distributed like $P_u$,
and the distribution of $y+u$ conditioned on $x+u$ remains the same.
In addition, the edge $[x+u,y+u]$ belongs to $A+u$
iff the edge $[x,y]$ belongs to $A$.

Proposition~\ref{prop:Probconst} allows to sample
a vertex at level $\ell+1$ from a vertex at level $\ell$.
By repeatedly applying the proposition, 
we can grow a chain from any starting vertex
either towards $\emptyset$ or in the opposite direction.
A crucial property of this construction is its reversibility.
For example, if $z \sim P_*$ and we grow a chain from $z$
towards $\emptyset$, then we can generate the same distribution
by growing a chain from $\emptyset$ outwards,
and stop at a random time distributed like $|z|$.
This reversibility is instrumental in the proof of Lemma~\ref{lem:C+} below.

To handle the unknown orientation, we generate three random chains of edges
$C,C_+$ and $C_-$:

\begin{enumerate}
\item Let $C$ be a random chain of edges that contains the edge $[x,y]$
as follows.
Use Proposition~\ref{prop:Probconst} with $P_*$
in two opposite directions.
From the minimum between $x,y$ grow a chain towards $\emptyset$.
From the maximum between $x,y$ grow a chain towards $J_*$.
The chain $C$ is obtained by gluing these two chains
via the edge $[x,y]$.

\item The chain $C_+$ is generated as follows.
Let $y_+$ be chosen uniformly at random in $N_*(x)$ conditioned on
$|y_+ \cap J_*|=|x\cap J_*|+1$; when there is no such option,
set $C_+$ to be empty.
As in the construction of $C$,
the chain $C_+$ is obtained by
growing a chain that contains the edge $[x,y_+]$.

\item The chain $C_-$ is generated similarly.
Let $y_-$ be uniformly random in $N_*(x)$ conditioned on
$|y_-\cap J_*|=|x\cap J_*|-1$; when there is no such option,
set $C_-$ to be empty.
As in the construction of $C$ and $C_+$,
the chain $C_-$ is obtained by
growing a chain that contains the edge $[y_-,x]$.

\end{enumerate}

There is a coupling of the three chains $C,C_+$ and $C_-$
so that each chain separately has the correct distribution,
and the edge $[x,y]$ is always equal to one of $[x,y_+]$ or $[y_-,x]$.
First, choose $x\sim P$.
Then choose a neighbor $y_+$ of $x$ 
uniformly at random on the level above $x$,
and choose a neighbor $y_-$ of $x$ 
uniformly at random on the level below $x$.
If one of the two sets of neighbors is empty
then the corresponding chain is empty.
To generate $C_+$ extend the edge $[x,y_+]$
to a chain.
To generate $C_-$ extend the edge $[y_-,x]$
to a chain as well.
Finally, choose $C$ to be one of $C_+$
or $C_-$ with the appropriate chance
so that $y \in \{y_+,y_-\}$ is a uniform neighbor of $x$.


The key idea is to control the distribution of $x$ conditioned on $C_+$.
\begin{lemma}
\label{lem:C+}
There is a universal constant $K>0$ so that the following holds.
Denote by $c_{0},c_1,\ldots,c_{n_*}$
the ${n_*}+1$ vertices that appear in some fixed chain $c_+$ of edges.
Then, for all $\ell  \in \{0,1,\ldots,{n_*}\}$, we have $\Pr[x=c_\ell|C_+=c_+]
 \leq \frac{K n^4}{\sigma^9_P}$.
\end{lemma}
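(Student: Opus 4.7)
The plan is to compute $\Pr[x = c_\ell \mid C_+ = c_+]$ in closed form using the detailed balance built into Proposition~\ref{prop:Probconst}, and then bound the result via Theorem~\ref{thm:anticon} together with a two-sided polynomial bound on the chain's transition probabilities.

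Write $\pi_{k \to k+1}(s, s\cup\{j\})$ for the transition probability from Proposition~\ref{prop:Probconst} applied to the restricted distribution $P_*$. Detailed balance gives $\pi_{k \to k+1}(s,s')\,P_{*,k}(s) = \pi_{k+1 \to k}(s',s)\,P_{*,k+1}(s')$, so the backward (downward) chain probabilities factor telescopically into a product of forward transitions divided by $P_{*,\ell}(c_\ell)$. Combining this with the uniform choice of $y_+$ among the $n_*-\ell$ upward neighbors of $x$ in $J_*$ and with the forward upward chain from $y_+$ gives
\[
\Pr[x = c_\ell,\, C_+ = c_+] \;=\; \Pr[|x|=\ell]\cdot \frac{\Pi(c_+)}{(n_*-\ell)\,\pi_{\ell \to \ell+1}(c_\ell, c_{\ell+1})},
\]
where $\Pi(c_+) := \prod_{k=0}^{n_*-1}\pi_{k \to k+1}(c_k, c_{k+1})$. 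Setting $p_k := (n_*-k)\,\pi_{k \to k+1}(c_k, c_{k+1})$ and conditioning, the factor $\Pi(c_+)$ cancels and
\[
\Pr[x = c_\ell \mid C_+ = c_+] \;=\; \frac{\Pr[|x|=\ell]/p_\ell}{\sum_k \Pr[|x|=k]/p_k}.
\]

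The heart of the argument is the two-sided bound $p_k \in [p_*^2, 1/p_*^2]$ for every $k$ and every chain $c_+$. Since $p_j(1-p_j)\geq p_*$ for $j \in J_*$, each $q_j=p_j/(1-p_j)$ lies in $[p_*, 1/p_*]$ up to constants, so $q_j/q_{j'} \in [p_*^2, 1/p_*^2]$. For fixed $s$ of size $\ell$ and $j, j' \notin s$, I compare $h_{s,j}$ and $h_{s,j'}$ term by term: they agree on all $t$ avoiding both $j$ and $j'$, and pairing each $t$ containing $j'$ but not $j$ with $(t \setminus \{j'\}) \cup \{j\}$ shows that the remaining terms differ exactly by the factor $q_j/q_{j'}$. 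This yields $\pi(s,s \cup \{j\})/\pi(s,s \cup \{j'\}) \in [\min(1, q_j/q_{j'}), \max(1, q_j/q_{j'})]$, and since the transitions sum to one over $n_*-\ell$ directions, the normalized value $p_\ell$ stays in $[p_*^2, 1/p_*^2]$.

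With this bound, Theorem~\ref{thm:anticon} gives $\Pr[|x|=\ell]/p_\ell \leq C_1/(\sigma_{P_*}\,p_*^2)$, while $1/p_k \geq p_*^2$ makes the denominator at least $p_*^2\sum_k \Pr[|x|=k]=p_*^2$. Hence
\[
\Pr[x = c_\ell \mid C_+ = c_+] \;\leq\; \frac{C_1}{\sigma_{P_*}\, p_*^4},
\]
and substituting $\sigma_{P_*}^2 \geq \sigma_P^2/2$ and $p_* = \sigma_P^2/(2n)$ gives a bound of order $n^4/\sigma_P^9$. The main obstacle is the combinatorial comparison of $h_{s,j}$ and $h_{s,j'}$: the sums run over large asymmetric index sets and the bijection on the nontrivial part must be set up carefully, since any sloppiness in the $\pi$-ratio bound doubles the exponent of $1/p_*$ and yields only a much weaker estimate like $n^8/\sigma_P^{17}$.
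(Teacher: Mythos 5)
Your argument is essentially the same as the paper's: Bayes plus the detailed-balance relation built into Proposition~\ref{prop:Probconst} to rewrite everything in terms of $\Pr[|x\cap J_*|=\ell]$ and the normalized transition probabilities, then the bijective comparison of $h_{s,j}$ and $h_{s,j'}$ (pairing $t\mapsto(t\setminus\{j'\})\cup\{j\}$) to get $(n_*-|s|)Q(s,j)\in[p_*^2,1/p_*^2]$, and finally Theorem~\ref{thm:anticon}. The one organizational difference is harmless: you extract the closed-form conditional probability
$\Pr[x=c_\ell\mid C_+=c_+]=\bigl(\Pr[|x\cap J_*|=\ell]/p_\ell\bigr)\big/\sum_{k}\bigl(\Pr[|x\cap J_*|=k]/p_k\bigr)$
and bound numerator and denominator separately, whereas the paper derives the ratio for two levels $\ell,\ell'$, establishes \eqref{eqn:finalX|c} by a short contradiction argument, and then applies the anti-concentration bound. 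Same ingredients, same exponents, slightly different bookkeeping.

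One small correction to watch: your denominator sum should run only over $k\in\{0,\ldots,n_*-1\}$, since for $k=n_*$ the vertex $c_{n_*}$ has no upward neighbor and the chain $C_+$ is set to empty, so the joint probability vanishes (and $p_{n_*}$ as you defined it is not meaningful). Hence $\sum_k\Pr[|x\cap J_*|=k]=1-\Pr[|x\cap J_*|=n_*]$, not $1$. This costs you only a constant factor (bounded by $2$ using Theorem~\ref{thm:anticon}, which is exactly where the paper's assumption $C_1/\sigma_{P_*}<1/2$ enters), so the final bound $O(n^4/\sigma_P^9)$ is unaffected.
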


\begin{proof}[Proof of lemma]
For every $\ell$,
\begin{align*}
\Pr[x=c_\ell|C_+=c_+]
& = \frac{\Pr[x=c_\ell] \Pr[C_+=c_+|x=c_\ell]}{\Pr[C_+=c_+]}  .
\end{align*}
Let $Q(s,j) = Q(s,\{j\})$ be the probability to move from $s$ to $s \cup \{j\}$
according to the construction in Proposition~\ref{prop:Probconst};
here $j$ is assumed to be not in $s$.
Let $Q_{j,j+1} = Q(c_j , c_{j+1} \setminus c_j)$.
Similarly, denote by $Q_{a+1,a}$ the probability to move
from $c_{a+1}$ to $c_{a}$.
By construction, for every $j<n_*$,
\begin{align*}
\frac{\Pr[x = c_j]}{\Pr[|x \cap J_*| = j]} Q_{j,j+1} & = 
\Pr[x = c_j||x \cap J_*| = j]
Q_{j,j+1} \\
& = \Pr[x = c_{j+1}||x \cap J_*| = j+1] Q_{j+1,j} \\
& = \frac{\Pr[x = c_{j+1}]}{\Pr[|x \cap J_*| = j+1]} Q_{j+1,j} .
\end{align*}
So,
\begin{align*}
Q_{j+1,j} =
\frac{\Pr[x = c_j]}{\Pr[|x \cap J_*| = j]} \frac{\Pr[|x \cap J_*| = j+1]}{\Pr[x = c_{j+1}]}
Q_{j,j+1}.
\end{align*}
Focus on (for $\ell < n_*$)
\begin{align*}
& \Pr[C_+=c_+|x=c_\ell] \\
& = \frac{1}{n_*-\ell} \prod_{j>\ell} Q_{j,j+1}
\prod_{a< \ell} Q_{a+1,a} \\
& = \frac{1}{n_*-\ell} \prod_{j>\ell} Q_{j,j+1}
\prod_{a< \ell} Q_{a,a+1} 
\frac{\Pr[x = c_a]}{\Pr[|x \cap J_*| = a]} \frac{\Pr[|x \cap J_*| = a+1]}{\Pr[x = c_{a+1}]}
\\
& = \frac{1}{(n_*-\ell)Q_{\ell,\ell+1}} \frac{\Pr[|x \cap J_*| = \ell]}{\Pr[|x \cap J_*| = 0]}
\frac{\Pr[x = c_0]}{\Pr[x = c_\ell]}
\prod_{j} Q_{j,j+1}.
\end{align*}
Hence, for every $\ell \neq \ell'$ that are less than $n_*$,
\begin{align}
\label{eqn:Prell}
\frac{\Pr[x=c_\ell|C_+=c_+]}{\Pr[x=c_{\ell'}|C_+=c_+]} 
& = 
\frac{\Pr[x=c_\ell]\Pr[C_+=c_+|x=c_\ell]}{\Pr[x=c_{\ell'}]\Pr[C_+=c_+|x=c_{\ell'}]} \\ \notag
& = \frac{(n_*-\ell')Q_{\ell',\ell'+1}\Pr[|x \cap J_*|=\ell]}{(n_*-\ell) Q_{\ell,\ell+1}\Pr[|x \cap J_*|=\ell']} .
\end{align}
We need to understand this ratio.

In Proposition~\ref{prop:Probconst}, we used the notation
$q_j = \frac{p_j}{1-p_j}$.
By construction, for every $s$ of size $\ell$ and $j$ not in $s$,
\begin{align*}
Q(s,j) = \frac{q_j}{g_{\ell+1}(q)} \sum_{t: |t|=\ell, j \not \in t} 
\frac{1}{|(s \cup \{j\}) \setminus t|} \prod_{a \in t} q_a .
\end{align*}
Let $j'$ be not in $s \cup \{j\}$.
There is a one-to-one correspondence between 
sets $t$ of size $\ell$ that do not contain $j$,
and sets $t'$ of size $\ell$ that do not contain $j'$;
if $t$ does not contain $j'$ then $t'=t$,
and if $t$ contains $j'$ then $t' = (t \setminus \{j'\}) \cup \{j\}$.
For every $t$ and the corresponding $t'$,
\begin{align*}
\frac{q_j}{|(s \cup \{j\}) \setminus t|} \prod_{a \in t} q_a 
& \leq
\max\Big\{1, \frac{q_{j}}{q_{j'}}\Big\} \frac{q_{j'}}{|(s \cup \{j'\}) \setminus t'|} \prod_{a \in t'} q_a. 
\end{align*}
We can always bound
\begin{align*}
\frac{q_j}{q_{j'}} \leq \frac{\frac{1}{p_j(1-p_j)}}{p_{j'}(1-p_{j'})}
\leq \frac{1}{p_*^2} . 
\end{align*}
It follows that for all $j,j'$ we have $\frac{Q(s,j)}{Q(s,j')}
\leq \frac{1}{p_*^2}$. 
Because $j \mapsto Q(s,j)$ is a probability distribution
on a universe of size $n_*-|s|$, 
we can conclude that for all $j$,
$$p_*^2 \leq (n_*-|s|)Q(s,j) \leq \frac{1}{p_*^2}.$$

It now follows that for all $\ell \neq \ell'$ that are less than $n_*$,
\begin{align*}
p_*^4 \leq \frac{(n_*-\ell')Q_{\ell',\ell'+1}}{(n_*-\ell) Q_{\ell,\ell+1}} 
\leq \frac{1}{p_*^4} .
\end{align*}

Together with~\eqref{eqn:Prell} we get that 
for every $\ell < n$,
\begin{align}
\label{eqn:finalX|c}
\Pr[x=c_\ell|C_+=c_+]
 \leq \frac{2}{p_*^4} \Pr[|x \cap J_*|=\ell].
 \end{align}
Indeed, if for some $\ell_0$ the above does not hold then
for all $\ell<n_*$,
$$\Pr[x=c_{\ell}|C_+=c_+] > 2 \Pr[|x \cap J_*|=\ell]$$
(we can assume that $\frac{C_1}{\sigma_P}< \frac{1}{2}$).
Summing over $\ell < n_*$,
Theorem~\ref{thm:anticon} yields a contradiction:
$$1 > 2(1-\Pr[|x \cap J_*| = n_*]) > 1.$$

Finally,~\eqref{eqn:finalX|c}, the choice of $p_*$, and
another application of Theorem~\ref{thm:anticon} 
complete the proof of the lemma.
\end{proof}
The lemma allows to control the behavior of $[x,y_+]$:
\begin{align*}
\Pr[[x,y_+] \in A]
& = \E_{C_+} \Pr[[x,y_+] \in A| C_+] \\
& \leq \E_{C_+} \Pr[|x| =  h(C_+)|C_+] 
 \leq \frac{K n^4}{\sigma^9_P} ,
\end{align*}
where $h(C_+)$ is the height of the unique edge in 
$C_+ \cap A$ (if there is such an edge).
A similar bound holds for $\Pr[[y_-,x] \in A]$.
By the coupling described above,
$$\Pr[[x,y] \in A] \leq
\Pr[[x,y_+] \in A] + \Pr[[y_-,x] \in A].$$

\end{proof}

\section{Oriented monotone functions}
\label{sec:monotone}
The following section explains how to bound from above the 
measure of oriented antichains of edges of a specific type.
Antichains that are generated by oriented monotone functions.
A function $f : \{0,1\}^n \to \{0,1\}$ is (locally) monotone
if for every $x \in \{0,1\}^n$ and for every $j \in [n]$
so that $x_j = 0$ it holds that $f(x+e_j) \geq f(x)$,
where addition is coordinate-wise modulo two
and $e_j$ is the unit vector with a one in the $j$'th coordinate.
Again, the hypercube can be oriented so that
$u \in \{0,1\}^n$ is its new origin.
A function $f$ is $u$-monotone if the map
$x \mapsto f(x+u)$ is monotone.

The hyperplane $\ip{z}{v}=\mu$ defines the linear threshold function
$f(z) = \mathsf{sign}(\ip{z}{v} - \mu)$.
The set of edges $A$ that are sliced by this hyperplane is not just an oriented antichain. It has additional structure.
The map $f$ is $u$-monotone with $u$ 
so that $u_j = 1$ iff $v_j > 0$.
The set of sliced edges $A$ is deeply related to the sensitivity
of~$f$.
Fourier analysis is a standard tool for controlling sensitivity.
The specific implementation for general product measures
we use appears in the work of Bshouty and Tamon~\cite{bshouty1996fourier}.

The sensitivity of oriented monotone functions, however, is not
our main concern. We need to find distributions
on edges that typically produce unsliced edges.

\begin{theorem}
\label{thm:monotone}
Let $P$ be a non trivial product distribution on $\{0,1\}^n$.
There is a distribution on edges $[x,y]$ 
so that $x \sim P$ with the following property.
Let $u \in \{0,1\}^n$ and let
$f : \{0,1\}^n \to \{0,1\}$ be $u$-monotone. 
Then,
$$\Pr[f(x) \neq f(y)] \leq \frac{1}{\sigma_P}.$$
\end{theorem}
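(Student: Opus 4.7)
The plan is to weight each coordinate by its variance. I would sample $x \sim P$ and, independently, a coordinate $j$ with probability $w_j = p_j(1-p_j)/\sigma_P^2$ (so $\sum_j w_j = 1$), then set $y = x \oplus e_j$. Writing $I_j(f) := \Pr_{x \sim P}[f(x) \neq f(x \oplus e_j)]$ for the $j$-th $P$-influence of $f$, this yields
$$\Pr[f(x) \neq f(y)] = \sum_j w_j \, I_j(f).$$
The choice $w_j \propto p_j(1-p_j)$ is the one that will optimize the Cauchy--Schwarz step below; crucially, the distribution on $(x,y)$ depends on neither $u$ nor $f$, as required.

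The main tool will be the product-measure Fourier expansion of Bshouty and Tamon~\cite{bshouty1996fourier}, built from the orthonormal characters $\phi_j(z) = (z_j - p_j)/\sqrt{p_j(1-p_j)}$ and their products $\phi_S = \prod_{j \in S}\phi_j$. A short computation gives
$$\hat{f}(\{j\}) \;=\; \sqrt{p_j(1-p_j)}\,\bigl(\E_P[f \mid x_j = 1] - \E_P[f \mid x_j = 0]\bigr).$$
When $f$ is $u$-monotone, after fixing $x_{-j}$ the two values $f(x_{-j},0)$ and $f(x_{-j},1)$ are ordered in the direction determined by $u_j$; combined with independence of $x_j$ from $x_{-j}$ under $P$, this gives the key identity
$$|\hat{f}(\{j\})| \;=\; \sqrt{p_j(1-p_j)} \cdot I_j(f).$$

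Substituting back, $\Pr[f(x) \neq f(y)] = \sigma_P^{-2}\sum_j \sqrt{p_j(1-p_j)}\,|\hat{f}(\{j\})|$. Cauchy--Schwarz then bounds this by $\sigma_P^{-2} \cdot \sigma_P \cdot \bigl(\sum_j \hat{f}(\{j\})^2\bigr)^{1/2}$, while Parseval together with $f \in \{0,1\}$ gives $\sum_j \hat{f}(\{j\})^2 \leq \|f\|_2^2 \leq 1$. This yields the advertised bound $1/\sigma_P$.

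The main obstacle is identifying the correct edge distribution: once the weights $w_j \propto p_j(1-p_j)$ are spotted, the rest is routine Fourier manipulation. The only subtle point is the sign identity for $u$-monotone $f$, but this is a one-line consequence of unwinding the definition of $u$-monotonicity together with the independence of coordinates under $P$.
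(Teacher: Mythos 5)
Your proposal is correct and follows essentially the same route as the paper's proof: both sample $y = x \oplus e_j$ with $j$ weighted proportionally to $p_j(1-p_j)$, both pass to the Bshouty--Tamon product-measure Fourier basis, both use monotonicity to identify the degree-one coefficient with the $P$-influence, and both finish with Cauchy--Schwarz and Parseval. The only (cosmetic) difference is that the paper first translates by $u$ to reduce to a genuinely monotone $F(z) = f(u+z)$, whereas you work with $f$ directly and absorb the $u$-dependent sign of $\hat f(\{j\})$ into an absolute value.
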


\begin{remark}
It is important that the distribution of $[x,y]$
does not depend on $u$.
\end{remark}

\begin{remark}
Theorem~\ref{thm:monotone} is less general
but quantitively stronger than Theorem~\ref{thm:baker}.
\end{remark}

\begin{proof}
Choose $x \sim P$.
The distribution of $y$ conditioned on $x$ is as follows.
For $j \in [n]$, let $\sigma_j = p_j(1-p_j)$.
For each $j \in [n]$, choose $y = x+e_j$ with probability 
$\frac{\sigma^2_j}{\sigma_P^2}$.
It remains to bound the probability that $f(x) \neq f(y)$.

Let $F$ be the map defined by $F(z) = f(u+z)$.
By assumption, the map $F$ is monotone.
Denote by $Q$ the distribution of $x+u$.
Let $z \sim Q$ and let $w$ be a neighbor of $z$
distributed so that $w = z+e_j$ with probability 
$\frac{\sigma^2_j}{\sigma_P^2}$.
The distribution of $[x+u,y+u]$ is identical to that of $[z,w]$ so that
$$\Pr[f(x) \neq f(y)] = \Pr[F(z) \neq F(w)].$$

We briefly mention the essentials of Fourier analysis
over the hypercube.
For $j \in [n]$, let $q_j = \Pr[x_j+u_j=1]$.
The $n$ random variables $\frac{z_1-q_1}{\sigma_1},\ldots,\frac{z_n-q_n}{\sigma_n}$ are independent 
variables with expectation zero and variance one.
The $2^n$ maps 
$$\chi_S(z) = \prod_{j \in S} \frac{z_j-q_j}{\sigma_j}$$
form an orthonormal basis for the space of functions
from $\{0,1\}^n$ to $\R$ with respect to the inner product
$\E_{z \sim Q} h(z) g(z)$.
Write $F$ as
$$F(z) = \sum_{S \subseteq [n]} \hat{F}(S) \chi_S(z).$$
Because $F$ is Boolean, we know $\sum_S (\hat{f}(S))^2 \leq 1$.

The sensitivity of a monotone function is known
to have a nice Fourier-theoretic representation. 
For $j \in [n]$ and $z \in \{0,1\}^n$,
let $z^{j\to1}$ be the vector that is the same as $z$
except that it always has a one in the $j$'th coordinate,
and let $z^{j \to 0}$ be defined similarly.
Because $F$ is monotone, 
\begin{align*}
\Pr[F(z) \neq F(w)]
& = \E_z \sum_j \frac{\sigma_j^2}{\sigma_P^2}
1_{F(z) \neq F(z+e_j)} \\
& = \sum_j  \frac{\sigma_j}{\sigma_P^2}
\sigma_j \E_z  (F(z^{j \to 1}) - F(z^{j \to 0})) .
\end{align*}
For fixed $j \in [n]$,
denote by $z_{-j}$ the vector $z$ after deleting the $j$'th
coordinate so that
\begin{align*}
\sigma_j \E_z  (F(z^{j \to 1}) - F(z^{j \to 0}))
& = \E_z  \frac{q_j(1-q_j)}{\sigma_j} (F(z^{j \to 1}) - F(z^{j \to 0})) \\
& =  \E_z  q_j \frac{1-q_j}{\sigma_j} F(z^{j \to 1}) 
+ (1-q_j) \frac{-q_j}{\sigma_j} F(z^{j \to 0}) \\
& =  \E_{z_{-j}} \E_{z_j}  \chi_{\{j\}}(z) F(z) \\
& =  \hat{F}(\{j\}) .
\end{align*}
By Cauchy-Schwartz, we can deduce
\begin{align*}
\Pr[F(z) \neq F(w)]
 = \frac{1}{\sigma_P^2} \sum_j \sigma_j \hat{F}(\{j\}) \leq \frac{1}{\sigma_P^2} \sigma_P \sqrt{ \sum_j (\hat{F}(\{j\}))^2} 
 \leq \frac{1}{\sigma_P}.
\end{align*}
\end{proof}

\section{Vectors with many scales}
\label{sec:scales}

In this section we prove a strong anti-concentration bound
for vectors with many scales
(see definition below).
We start with a simple claim.

\begin{claim}
\label{clm:positiveProb}
There is a constant $C_0>1$ so that the following holds.
If $u \in \R^n$ has norm $\|u\|_2 =1$ then
$$\Pr_{x \sim \{\pm 1\}^n} [\tfrac{1}{C_0} \leq |\ip{x}{u}| \leq C_0]
\geq \frac{1}{C_0},$$
where $x\sim \{\pm 1\}^n$ means that $x$ is uniformly distributed
in the cube.
\end{claim}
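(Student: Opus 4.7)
The plan is to analyze the random variable $S = \ip{x}{u} = \sum_i u_i x_i$ via a second-moment / fourth-moment argument. Since the $x_i$ are independent and uniform on $\{\pm 1\}$, we have $\E S = 0$ and $\E S^2 = \|u\|_2^2 = 1$. The lower bound on $|S|$ comes from Paley--Zygmund, and the upper bound comes from Markov's inequality applied to $S^4$; both require controlling $\E S^4$.

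The first step is to estimate $\E S^4$. Expanding the sum $\E S^4 = \sum_{i,j,k,l} u_i u_j u_k u_l \E[x_i x_j x_k x_l]$, only terms where indices pair up survive, so
\[
\E S^4 = \sum_i u_i^4 + 3 \sum_{i \neq j} u_i^2 u_j^2 \;\leq\; 3\Bigl(\sum_i u_i^2\Bigr)^2 = 3.
\]
Given this, Paley--Zygmund applied to the non-negative random variable $S^2$ gives, for any $\theta \in (0,1)$,
\[
\Pr[S^2 \geq \theta] \;\geq\; (1-\theta)^2\,\frac{(\E S^2)^2}{\E S^4} \;\geq\; \frac{(1-\theta)^2}{3}.
\]
Choosing $\theta = 1/2$ yields $\Pr[|S| \geq 1/\sqrt{2}] \geq 1/12$.

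For the upper tail, Markov on $S^4$ gives $\Pr[|S| > t] \leq 3/t^4$, so picking $t$ a sufficiently large absolute constant (say $t \geq 3$) makes this at most $1/24$. Combining the two one-sided bounds by the elementary inequality $\Pr[A \cap B] \geq \Pr[A] - \Pr[B^c]$, I would conclude
\[
\Pr\bigl[1/\sqrt{2} \leq |S| \leq t\bigr] \;\geq\; \frac{1}{12} - \frac{1}{24} = \frac{1}{24}.
\]
Choosing $C_0 = \max(\sqrt{2},\,t,\,24)$ simultaneously ensures $1/C_0 \leq 1/\sqrt{2}$, $C_0 \geq t$, and $1/C_0 \leq 1/24$, finishing the claim. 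There is no serious obstacle here; the only thing to be careful about is juggling the constants so that a single $C_0$ controls the window $[1/C_0, C_0]$ and the probability bound $1/C_0$ at once. One could alternatively invoke the Khintchine inequality to obtain $\E S^4 \leq 3$ as a black box, but the direct pairing calculation is clean enough that I would include it.
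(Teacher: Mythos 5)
Your proof is correct and follows essentially the same route as the paper: Paley--Zygmund applied to $(\ip{x}{u})^2$ for the lower tail, Markov on the fourth moment for the upper tail, and a choice of $C_0$ covering both the window and the probability. Your fourth-moment bound $\E S^4 \le 3$ is slightly tighter than the paper's $\le 7$, but this only shifts the value of the constant and does not change the argument.
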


\begin{proof}
Let $Z = (\ip{x}{u})^2$. So,
$\E Z =1$
and
$$\E Z^2 \leq 6 \|u\|_2^4 + \|u\|_4^4 \leq 7.$$
By the Payley-Zygmond inequality,
\begin{align*}
\Pr[Z \geq C_0^{-2}] \geq 
\frac{(1-C_0^{-2})^2}{7} .
\end{align*}
By Markov's inequality,
$$\Pr[ Z \geq C_0^2 ] \leq \frac{1}{C_0^2}.$$
\end{proof}

\begin{defn}
The vector $v \in \R^n$ has $S$ scales if
$v$ can be partitioned to $S$ vectors
$v^{(1)},v^{(2)},\ldots,v^{(S)}$
so that
$$\|v^{(s)}\|_2 \geq 4C_0^2 \|v^{(s+1)}\|_2$$
for all $s < S$, where $C_0$ is
from Claim~\ref{clm:positiveProb}.
The smallest scale of $v$ is defined to be $\|v^{(S)}\|_2$.
\end{defn}

\begin{lemma}
\label{lem:probBound}
There is a constant $C_3 > 1$ so that the following holds.
If $v \in \R^n$ has $S$ scales and its smallest scale is $\delta \geq 0$,
then for every $a \in \R$ and $b \geq 2$ we have
$$\Pr_{x \sim \{\pm 1\}^n}[|\ip{x}{v} - a| < b \delta] < C_3 \exp(-\tfrac{S}{C_3}
+C_3 \log(b)).$$
\end{lemma}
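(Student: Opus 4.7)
My plan has two steps. First, a simple union-bound reduction removes the polynomial $b$ factor. Partitioning the interval $(a-b\delta,a+b\delta)$ into $O(b)$ sub-intervals of length $2\delta$ gives
\[
\Pr_{x\sim\{\pm 1\}^n}[\,|\ip{x}{v}-a|<b\delta\,] \;\leq\; O(b)\cdot \sup_{a'}\Pr[\,|\ip{x}{v}-a'|<\delta\,],
\]
so it suffices to prove the sharper statement
\begin{equation*}
\sup_{a'}\Pr[\,|\ip{x}{v}-a'|<\delta\,] \;\leq\; C\exp(-S/C) \tag{$\star$}
\end{equation*}
for $v$ with $S$ scales of smallest scale $\delta$; the factor $b^{C_3}$ in the statement of the lemma then has plenty of room.

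The second step attacks $(\star)$ via a Fourier-analytic estimate. Write $Y=\ip{x}{v}=\sum_{s=1}^S X_s$ where $X_s$ is the contribution of scale $s$ and the $X_s$ are independent, and set $R_s=1/\|v^{(s)}\|_2$, so that $R_1<\cdots<R_S=1/\delta$ and $R_{s+1}\geq 4C_0^2 R_s$. By a standard Esseen-type concentration inequality,
\[
\sup_a \Pr[\,|Y-a|\leq \delta\,] \;\leq\; C\delta\int_{|\xi|\leq 1/\delta}|\phi_Y(\xi)|\,d\xi,
\]
where $\phi_Y(\xi)=\prod_j\cos(\xi v_j)=\prod_s\phi_s(\xi)$ and $\phi_s$ is the characteristic function of $X_s$. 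I would partition the integration range into the dyadic rings $[R_{s-1},R_s]$ (with $R_0=0$). On the $s$-th ring, every scale $t\geq s$ satisfies $|\xi v_j|\leq 1$ for $j\in I_t$, and the elementary bound $|\cos\theta|\leq\exp(-c\theta^2)$ for $|\theta|\leq 1$ yields the baseline
\[
|\phi_Y(\xi)|^2 \;\leq\; \exp\!\Bigl(-c\xi^2\sum_{t\geq s}\|v^{(t)}\|_2^2\Bigr) \;\leq\; \exp\bigl(-c(\xi/R_s)^2\bigr).
\]

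This baseline alone only bounds the Fourier integral by an absolute constant, so it must be upgraded. The upgrade is where Claim~\ref{clm:positiveProb} enters. Applied to the unit vector $v^{(s)}/\|v^{(s)}\|_2$, the claim says that $X_s/\|v^{(s)}\|_2$ puts constant mass in the annulus $[1/C_0,C_0]\cup[-C_0,-1/C_0]$, which rules out concentration of $X_s$ at a lattice, and I would extract from this a pointwise estimate $|\phi_s(\xi)|\leq 1-1/C'$ valid on a macroscopic resonant window of length $\Theta(R_s)$. Because the $R_s$ are separated by the multiplicative factor $4C_0^2$, these windows for distinct scales are pairwise disjoint; hence on every dyadic ring the product $\prod_t|\phi_t(\xi)|$ collects an independent factor $\leq 1-1/C'$ from each of the scales above the ring. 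Aggregating over all $S$ scales yields the advertised $\exp(-S/C)$ decay in the Fourier integral, proving $(\star)$.

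The main obstacle is precisely this upgrade: promoting Claim~\ref{clm:positiveProb} into a pointwise Fourier bound $|\phi_s(\xi)|\leq 1-1/C'$ that holds on a window much larger than the quadratic-decay range $[-R_s,R_s]$, and then aggregating these bounds across $S$ scales without the resonances of different scales interfering. The quadratic bound coming from $|\cos\theta|\leq\exp(-c\theta^2)$ is effective only near $\xi=0$, so it cannot by itself produce dependence on $S$; it is the geometric spacing $R_{s+1}/R_s\geq 4C_0^2$ (beyond a single gap) that lets the windows from distinct scales coexist and contribute genuinely independent factors to the product.
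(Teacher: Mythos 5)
Your proposal takes a genuinely different route from the paper's, and it has a real gap at exactly the step you flag as ``the main obstacle.'' The paper never enters Fourier space. It works directly with the per-scale magnitudes $z_s=|\ip{x^{(s)}}{v^{(s)}}|$: by Chernoff plus Claim~\ref{clm:positiveProb}, with probability $\geq 1-C\exp(-S/C)$ at least $S/C$ of the $z_r$ land in the band $[\|v^{(r)}\|_2/C_0,\,C_0\|v^{(r)}\|_2]$, and any two such (with $r<r'$) then satisfy $z_r\geq 4z_{r'}$; discarding the $O(\log b)$ smallest, it conditions on everything except the signs $\eps_r=\mathrm{sign}(\ip{x^{(r)}}{v^{(r)}})$ for the surviving indices $r\in R$ and observes, by the super-geometric decay of the $z_r$, that at most one sign pattern $\eps\in\{\pm1\}^R$ can land the total in the target window, giving conditional probability $\leq 2^{-|R|}$. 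This conditioning argument sidesteps the entire resonance issue that your Esseen route must confront.

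The gap in your plan is the sentence asserting that Claim~\ref{clm:positiveProb} ``rules out concentration of $X_s$ at a lattice.'' It does not. Take $v^{(s)}$ supported on a single coordinate: then $X_s/\|v^{(s)}\|_2=\pm1$ satisfies the claim with probability one, yet $X_s$ is as lattice as can be and $\phi_s(\xi)=\cos(\xi\|v^{(s)}\|_2)$ equals $\pm1$ on the full arithmetic progression $\xi\in\pi\|v^{(s)}\|_2^{-1}\Z$, returning to unit modulus within $[0,1/\delta]$ many times. More importantly, even granting that each $\phi_s$ is $\leq 1-1/C'$ somewhere on a window of length $\Theta(R_s)$, these windows are far too short to help: on the ring $[R_{s-1},R_s]$ (length $\approx R_s$) you would need the factors from every scale $t<s$ to contribute simultaneously, but the scale-$t$ window has length only $\Theta(R_t)\ll R_s$ and in a lattice example there are frequencies $\xi$ inside the ring at which $\phi_t(\xi)=1$ for \emph{all} $t<s$ at once. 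The disjointness of the windows, which you cite, if anything works against you: a single $\xi$ can lie in at most one window, so it cannot ``collect an independent factor from each scale.'' Making the Fourier route rigorous would require a genuine lacunary-product estimate (controlling $\int_0^{1/\delta}\prod_s|\phi_s(\xi)|\,d\xi$ by an $\exp(-cS)$-type bound despite the resonances), which is a substantially harder argument than the paper's physical-space conditioning and is not supplied by Claim~\ref{clm:positiveProb} alone.
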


\begin{proof}
For $s \in [S]$ let $z_s = |\ip{x^{(s)}}{v^{(s)}}|$,
where we partitioned $x$ according to the partition of $v$.
Let $C>0$ be a large enough constant to be determined.
By Chernoff's inequality and Claim~\ref{clm:positiveProb}, with probability at least
$1-C\exp(- S/C)$, there are at least $S/C$
values of $r \in [S]$ so that
\begin{align}
\label{eqn:R1}
\frac{\|v^{(r)}\|_2}{C_0} \leq z_r \leq C_0 \|v^{(r)}\|_2.
\end{align}
Condition on this very likely event.
For every such $r < r'$,
$$z_{r} \geq \frac{\|v^{(r)}\|_2}{C_0}
\geq 4C_0 \|v^{(r')}\|_2 \geq 4 z_{r'}.$$
In particular, there are at least $\tfrac{S}{C}-C\log(b)$ such $r$'s so that
\begin{align}
\label{eqn:R2}
z_r > 4 b \delta.
\end{align}
Denote by $R$ the set of $r \in [S]$ so that both
\eqref{eqn:R1} and~\eqref{eqn:R2} hold.
Let $z = \sum_{s \not \in R} z_s$.
Conditioned on $R$, on the values of $z_r$ for $r \in R$, 
and on $z$,
the event that $|\ip{x}{v} - a| < b \delta$
is the same as
\begin{align}
\label{eqn:Revent}
\Big| z-a+\sum_{r \in R} \eps_r z_r \Big| < b\delta,
\end{align}
where $\eps \sim \{\pm 1\}^R$.
We claim that there is at most a single
$\eps$ so that~\eqref{eqn:Revent} holds.
Indeed, if $\eps \neq \eps'$ both satisfy~\eqref{eqn:Revent}
then
\begin{align*}
2 b \delta 
& > \Big| \sum_{r \in R} (\eps_r-\eps'_r) z_r  \Big| \\
& = \Big| (\eps_{r_0}-\eps'_{r_0}) z_{r_0} + \sum_{r \in R: r > r_0} (\eps_r-\eps'_r) z_r  \Big|,
\end{align*}
where $r_0$ is the minimum index in which $\eps$ and $\eps'$
differ (so $|\eps_{r_0}-\eps'_{r_0}|=2$).
This is a contradiction because 
\begin{align*}
z_{r_0} >  b \delta + \sum_{r \in R : r > r_0} z_r  .
\end{align*}
\end{proof}

\section{Decomposing a matrix}
\label{sec:decom}

The following lemma helps to identify where we need to focus our attention.

\begin{lemma}
\label{lem:VtoV'}
Let $V$ be the matrix whose rows are the skewed $v_1,\ldots,v_k \in \R^n$
with $k \leq n^{0.51}$ and $n$ large enough.
We can re-order the rows and columns of $V$ as follows.
Let $V' = (v'_{ij})$ be the matrix obtained by looking at the first 
$k' \leq k$ rows of $V$,
the first $n' \geq n/2$ columns of $V$,
and renormalizing each row of $V'$ to have $\ell_2$-norm one.
The following hold:
\begin{enumerate}
\item Every row $i > k'$ in $V$ that is not in $V'$ has $S := \lfloor n^{0.001} \rfloor$ scales, and the position of the smallest scale of row $i$ contains the first $n'$ columns.
\item
For every column $j \leq n'$ in $V'$,
\begin{align}
\label{eqn:norm2}
\sum_{i \leq k'} {v'}_{ij}^2 < n^{-0.487}.
\end{align}
In particular,
\begin{align}
\label{eqn:norm1}
\sum_{i \leq k'} |v'_{ij}|
< \sqrt{k n^{-0.487}} \leq n^{0.0115}.
\end{align}
\end{enumerate}

\end{lemma}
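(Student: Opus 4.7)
My plan is to construct the reordering via an iterative peeling procedure. Since the scale condition is scale-invariant, first renormalize every row of $V$ to unit $\ell_2$-norm. Let $V_\mathrm{good} \subseteq [k]$ and $C_\mathrm{good} \subseteq [n]$ denote the indices still eligible for $V'$; initialize them to $[k]$ and $[n]$, and maintain a partial scale decomposition for each row placed in $V_\mathrm{bad}$, whose committed scales are supported on $C_\mathrm{bad}$.

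The main loop is: while some $j \in C_\mathrm{good}$ has normalized squared $\ell_2$-norm exceeding $n^{-0.487}$ in the current $V'$ (i.e., $V$ restricted to $V_\mathrm{good}$ and $C_\mathrm{good}$ and then row-renormalized), fix such a heavy column $j^\ast$. Since $|V_\mathrm{good}| \le k \le n^{0.51}$, by pigeonhole some row $i^\ast \in V_\mathrm{good}$ has normalized squared entry at least $n^{-0.997}$ at $j^\ast$. Form the next scale of $v_{i^\ast}$ from $j^\ast$ together with any additional columns of $C_\mathrm{good}$ whose entries in $v_{i^\ast}$ are of comparable magnitude, chosen so that the $\ell_2$-mass of the new scale is at most $(4C_0^2)^{-1}$ times that of the previously committed scale of $v_{i^\ast}$. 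Transfer these columns to $C_\mathrm{bad}$. Scales are added in order of decreasing $\ell_2$-mass, so the geometric-decrease condition holds by design. Once $v_{i^\ast}$ has accumulated $S-1$ scales, retire it to $V_\mathrm{bad}$; its still-unassigned coordinates form $v^{(S)}$ and contain the terminal $C_\mathrm{good}$. Whenever a good row's residual mass $m_i := (\sum_{j \in C_\mathrm{good}} v_{ij}^2)^{1/2}$ drops below $1/2$, retire it immediately, partitioning its accumulated $C_\mathrm{bad}$-mass into $S-1$ scales using the same grouping rule.

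When the loop halts, conclusion~(2) is immediate; the auxiliary estimate $\sum_{i \le k'}|v'_{ij}| \le \sqrt{k}\cdot\sqrt{n^{-0.487}} \le n^{0.0115}$ follows from Cauchy--Schwarz. Conclusion~(1) follows from the explicit scale construction, which guarantees $S$ scales per bad row with smallest scale containing the terminal $C_\mathrm{good}$.

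The main obstacle will be to keep $|C_\mathrm{bad}| \le n/2$ while ensuring each bad row genuinely admits $S$ geometrically decreasing scales. A uniform row does not admit $n^{0.001}$ scales at all for large $n$, so the algorithm must avoid designating flat rows as bad; the saving grace is that a row with all entries of order $1/\sqrt{n}$ contributes only $O(1/n)$ per column to any column's squared norm and so cannot create heavy columns on its own, meaning rows that trigger heavy columns inherit enough magnitude spread to support the required scale structure. Quantitatively, the pigeonhole floor $n^{-0.997}$ on entries contributing to a heavy column, together with the geometric decrease of scale norms, bounds the number of columns each scale contributes; the delicate bookkeeping step is verifying that under the stated exponents $0.51$, $0.487$, $0.001$ the peeling terminates with $|C_\mathrm{bad}| \le n/2$, which is where I expect the most care to be required.
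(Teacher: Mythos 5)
Your high-level plan --- iteratively peel off heavy columns and track per-row scale structure, stopping when no column is too heavy --- matches the paper's strategy, and you correctly flag the termination bound $|C_{\mathrm{bad}}| \le n/2$ as the crux. But there are two genuine gaps.

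First, your mechanism for retiring a row whose residual mass drops below $1/2$ does not produce $S$ scales. You propose to ``partition its accumulated $C_{\mathrm{bad}}$-mass into $S-1$ scales using the same grouping rule,'' but there is no reason such a partition exists: a row whose removed mass is spread evenly over, say, $n^{0.4}$ coordinates of equal magnitude cannot be split into $\lfloor n^{0.001}\rfloor$ geometrically decreasing scales at all, since that would require $\exp(\Theta(n^{0.001}))$ coordinates. The paper avoids this post-hoc partitioning entirely by making scale creation and renormalization the same event. It fixes $\tau$ with $\sqrt{(1-\tau)/\tau} = 4C_0^2$; each time a row's residual mass (in its current normalization) falls below $\tau$, that is recorded as a ``drop,'' the mass peeled off since the last drop has norm $\ge \sqrt{1-\tau}$ while the remainder has norm $\le\sqrt\tau$, so the drop itself \emph{is} a new scale, and the row is renormalized to unit length. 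A row is moved out of $V'$ only after $S$ drops, which automatically certifies $S$ scales with the required decay; a flat row simply never accumulates enough drops. Your threshold $1/2$ also does not satisfy the decay condition, since $\sqrt{(1-1/2)/(1/2)}=1 \ne 4C_0^2$.

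Second, the termination count, which you leave as ``the most delicate bookkeeping step,'' is where the exponents are actually consumed, and it cannot be skipped. The paper's accounting: remove columns whose current mass exceeds $n^{-0.488}$ (a bit smaller than $n^{-0.487}$, leaving slack for the final row-renormalization loss of a factor at most $1/\sqrt\tau$). Let $\sigma_t$ be the mass of the column removed at time $t$ in the per-row normalizations current at time $t$; then $\sigma_t \ge n^{-0.488}$. On the other hand each row contributes at most $O(S)$ to $\sum_t\sigma_t$, since it loses at most unit mass between consecutive renormalizations and is renormalized at most $S$ times, so $\sum_t \sigma_t < Sk \le n^{0.001}\cdot n^{0.51}=n^{0.511}$. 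Hence the number of removed columns is $T < n^{0.511}/n^{-0.488}=n^{0.999}<n/2$, giving $n' \ge n/2$. Your pigeonhole step singling out one row $i^*$ per heavy column is not needed and in fact obscures the accounting, because every surviving row loses mass when a column is removed and must be credited for it.
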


\begin{remark}
The value of $k'$ can be zero.
In this case, all rows of $V$ have $S$ scales.
\end{remark}

\begin{proof}
It is convenient to use the following terminology.
The mass of a vector $u$ is $\|u\|_2^2$.
If there is no column $j$ in $V$ with mass
$$\sum_i v_{ij}^2 \geq n^{-0.488},$$
then we are done.
Otherwise, there are such columns.
Start moving them one-by-one to the end of $V$,
and removing them from $V'$ as described below.

When we remove a column from $V'$,
the norm of each row changes, but we do not immediately renormalize it.
Let $\tau >0$ be so that
$$\sqrt{\frac{1-\tau}{\tau}} = 4 C_0^2,$$
where $C_0$ is from Lemma~\ref{lem:distChain}.
If for a given row $i$, after the removal of a column,
the total current mass of the part
that remained in $V'$ drops below $\tau$, then mark a ``drop'' for row $i$,
and renormalize its norm in $V'$ to be one.
When a drop occurs, we get one more scale for $v_i$,
by the choice of $\tau$.
If a row is dropped more than $S$ times, then move it to the end of $V$ and remove it from $V'$. Each removed row has $S$ scales
with the position of the minimum scale containing all columns of $V'$.
This explains the first item.

Let 
$\sigma_t$ be the mass of the column
that was removed at time $t$, with the normalization at time $t$.
So, for all $t$,
$$\sigma_t \geq n^{-0.488}.$$
If there are $T$ iterations, then
$$\sum_t \sigma_t \geq n^{-0.488} T.$$
Let $\sigma_{t,i}$ be the contribution of row $i$ to 
$\sigma_t$; it is zero if the row was removed before time $t$.
Because every row is dropped at most $S$ times, for each row $i$, we have
$\sum_t \sigma_{t,i} < S$.
So,
$$\sum_t \sigma_t < Sk \leq n^{0.511}.$$
It follows that after $T < n^{0.999} \leq \frac{n}{2}$
steps, we get a matrix $V'$ so that the mass of every column $j$ in it is
$$\sum_{i \leq k'} {v'}_{ij}^2 < n^{-0.488}.$$
The rows of this matrix are not yet normalized.
The norm of each row is at least $\sqrt{\tau}$.
The renormalization can not increase the norm of the columns
by more than~$\frac{1}{\sqrt{\tau}}$.
After the final renormalization we get
\begin{align*}
\sum_{i \leq k'} {v'}_{ij}^2 < \frac{n^{-0.488}}{\sqrt{\tau}} < n^{-0.487}.
\end{align*}
\end{proof}

\section{Finding the missing edge}

We are ready to prove the main result.
Assume towards a contradiction that the $k\leq n^{0.51}$ hyperplanes 
defined by the skewed $v_1,\ldots,v_k$ and by $\mu_1,\ldots,\mu_k$ slice all edges of the hypercube $\{\pm 1\}^n$ for large $n$.
Our goal is to locate an edge $[x,y]$ that is not sliced.

Let $V$ be the $k \times n$ matrix whose rows are $v_1,\ldots,v_k$.
From the matrix $V$ we get the matrix $V'$ via Lemma~\ref{lem:VtoV'}.
Write $x = (x',x'')$ where $x'$ is the first $n'$ coordinates of $x$, 
and $x''$ is the last $n-n'$ coordinates.
Write each $v_i$ as $(v'_i,v''_i)$ similarly.
Recall that the first $k'$ rows in $V$ are also in $V'$,
and the rest are not in $V'$ and they have $S \approx n^{0.001}$ scales.
We deal with the last rows first.

\begin{claim}
There is $x'' \in \{\pm 1\}^{n-n'}$ so that 
for every $i > k'$, and for every $x' \in \{\pm 1\}^{n'}$,
$$|\ip{x''}{v''_i} - \mu_i| > |\ip{x'}{v'_i}|+\|v'_i\|_\infty.$$
\end{claim}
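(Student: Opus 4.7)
The plan is to pick $x''$ uniformly at random in $\{\pm 1\}^{n-n'}$ and show, via Lemma~\ref{lem:probBound} and a union bound over the rows $i > k'$, that the required inequality holds with overwhelming probability. Fix $i > k'$; since the hyperplane is invariant under $(v_i,\mu_i)\mapsto(cv_i,c\mu_i)$, I may assume $\|v_i\|_2 = 1$. Let $v_i = v^{(1)}_i + \cdots + v^{(S)}_i$ be the scale decomposition given by Lemma~\ref{lem:VtoV'}. The scales $v^{(1)}_i,\ldots,v^{(S-1)}_i$ lie entirely in the last $n-n'$ columns, while $v^{(S)}_i$ covers the first $n'$ columns (and may spill onto some of the last columns, namely those removed from $V'$ after row $i$). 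I split $v''_i = w + r$ with
$$w := v^{(1)}_i + \cdots + v^{(S-1)}_i, \qquad r := v^{(S)}_i|_{\text{last}},$$
so that the supports of $w$ and $r$ are disjoint subsets of the last $n-n'$ coordinates.

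Next I exploit this disjointness. Let $x''_w$ and $x''_r$ denote the restrictions of $x''$ to $\mathrm{supp}(w)$ and $\mathrm{supp}(r)$, respectively; they are independent uniform signs, and $\ip{x''}{v''_i} = \ip{x''_w}{w} + \ip{x''_r}{r}$. Condition on $x''_r$ so that $\beta := \ip{x''_r}{r}$ is fixed. With $R := \|v'_i\|_1 + \|v'_i\|_\infty$ (which upper-bounds $|\ip{x'}{v'_i}|+\|v'_i\|_\infty$ for every $x' \in \{\pm 1\}^{n'}$), the event $|\ip{x''}{v''_i}-\mu_i| \leq R$ becomes the anti-concentration event $|\ip{x''_w}{w} - (\mu_i-\beta)| \leq R$. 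The vector $w$ admits a clean $(S-1)$-scale decomposition with smallest scale $\delta := \|v^{(S-1)}_i\|_2$, so Lemma~\ref{lem:probBound} gives
$$\Pr_{x''_w}\bigl[|\ip{x''_w}{w} - (\mu_i-\beta)| \leq R\bigr] < C_3 \exp\!\bigl(-\tfrac{S-1}{C_3}+C_3\log b\bigr)$$
with $b := \max(R/\delta, 2)$; the right-hand side is independent of $x''_r$.

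What remains is to bound $b$. Since $v'_i$ is the restriction of $v^{(S)}_i$ to the first $n'$ columns, $\|v'_i\|_2 \leq \|v^{(S)}_i\|_2 \leq \delta/(4C_0^2)$ by the scale-ratio property. Hence $R \leq 2\|v'_i\|_1 \leq 2\sqrt{n'}\|v'_i\|_2 \leq \sqrt{n}\,\delta/(2C_0^2)$, so $b \leq \sqrt{n}$. With $S-1 = \Omega(n^{0.001})$ and $\log b = O(\log n)$, the per-row failure probability is $\exp(-\Omega(n^{0.001}))$, and this bound is preserved after averaging over $x''_r$. A union bound over the at most $k \leq n^{0.51}$ indices $i > k'$ gives total failure probability $\leq n^{0.51}\exp(-\Omega(n^{0.001})) = o(1)$ for $n$ large, so a valid $x''$ exists.

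The main obstacle I expect is choosing the decomposition of $v''_i$ so that its effective smallest scale is not artificially small. The naive partition $v^{(1)}_i,\ldots,v^{(S-1)}_i, v^{(S)}_i|_{\text{last}}$ has smallest scale $\|v^{(S)}_i|_{\text{last}}\|_2$, which can be arbitrarily tiny and would blow $b$ up; merging $v^{(S)}_i|_{\text{last}}$ into $v^{(S-1)}_i$ risks breaking the required $4C_0^2$ ratio between consecutive scales. Peeling off the spillover $r$ via the independence of $x''_w$ and $x''_r$, as above, sidesteps both problems and keeps $\|v^{(S-1)}_i\|_2$ as the effective smallest scale in the anti-concentration bound.
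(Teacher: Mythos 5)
Your proposal is correct and follows essentially the same route as the paper. The paper also chooses $x''$ uniformly at random, writes the smallest-scale block of $v_i$ as the union of $v'_i$ (on the first $n'$ columns) with a spillover piece $b$ on the remaining columns (your $r$), conditions on the signs over $b$, applies Lemma~\ref{lem:probBound} to the $S-1$ large scales with the same Cauchy--Schwartz bound $\|v'_i\|_1 + \|v'_i\|_\infty \le (\sqrt{n}+1)\|v'_i\|_2 \le (\sqrt{n}+1)\delta$, and concludes by a union bound over $i > k'$; your closing remark about why the spillover must be peeled off rather than merged into $v^{(S-1)}_i$ correctly identifies the point of conditioning on $x''_r$.
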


\begin{proof}
Start by fixing $i > k'$,
and choose $x''$ uniformly at random.
For simplicity of notation, let
$v=v_i$, let $v' = v'_i$ and let $v'' = v''_i$.
The vector $v$ has $S$ scales,
and $v'$ is part of its smallest scale.
Let $a$ be the vector consisting of the largest $S-1$
scales of~$v$.
The vector of the smallest scale in $v$ has two parts:
$v''$
and the part outside the first $n'$ columns,
which we denote by $b$.
Partition $x''$ to $x_a,x_b$ accordingly.
Let $\delta_a$ be the smallest scale of $a$.
By Cauchy-Schwartz,
$$|\ip{x'}{v'}| +\|v'\|_\infty \leq (\sqrt{n}+1)  \|v'\|_2 \leq 
(\sqrt{n}+1)  \delta_a.$$
By Lemma~\ref{lem:probBound}, conditioned on the value $x_b$,
$$\Pr_{x_a}[|\ip{x''}{v''} - \mu_i| \leq |\ip{x'}{v'}| +\|v'\|_\infty]
\leq C_3 \exp(-\tfrac{S-1}{C_3} +C_3 \log(n)).$$
The same bound holds when we average over $x_b$ as well.

Finally, the union bound completes the proof 
because $S$ is large enough.
\end{proof}

Let $x''$ be as promised by the claim above.
What did we achieve?
No matter how we choose $x'$,
and how we choose a neighbor $y=(y',y'')$ of $x=(x',x'')$,
if $y''=x''$, then
all hyperplanes $v_i,\mu_i$ for $i>k'$ do not slice the edge $[x,y]$.
This is a very strong guarantee.

It remains to deal with rows in $V'$.
The opening move is Bang's lemma.
Consider the $k' \times k'$ matrix
$M = V' {V'}^T$
with
$$\gamma_i = \mu_i - \ip{v''_i}{x''}
\quad \text{and} \quad \theta = n^{-0.0115}.$$
By Lemma~\ref{lem:bang}, 
there is $\eps \in \{\pm 1\}^{k'}$ so that for each $i \leq k'$,
\begin{align}
\label{eqn:number}
\left|\ip{u}{v'_i} - \gamma_i\right| \geq \theta,
\end{align}
where
$$u = \theta {V'}^T \eps.$$
By~\eqref{eqn:norm1},
$$\|u\|_\infty \leq 1.$$
We round $u$ to a vertex of the cube
in two different phases.
The first phase takes us almost all the way to a vertex of the cube.
\begin{claim}
There is $w \in \R^{n'}$ so that the following hold:
\begin{enumerate}
\item $\ip{w}{v'_i} = \ip{u}{v'_i}$ for each $i \leq k'$.
\item $\|w\|_\infty \leq 1$.
\item There is $\tilde k \leq k$ so that $|w_j|<1$ for $j \leq \tilde k$
and $|w_j|=1$ for $j> \tilde k$.
\end{enumerate}

\end{claim}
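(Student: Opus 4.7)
The plan is a standard basic-feasible-solution / extreme-point argument. Define the polytope
\[
K \;=\; \bigl\{w \in \R^{n'} : \ip{w}{v'_i} = \ip{u}{v'_i} \text{ for all } i \leq k', \ \|w\|_\infty \leq 1\bigr\}.
\]
First I would observe that $u \in K$ (by the bound $\|u\|_\infty \leq 1$ established above), so $K$ is nonempty; it is also convex and compact since it sits inside the $\ell_\infty$-unit ball. Therefore $K$ has an extreme point $w$, and I take this $w$ as the candidate. Conditions (1) and (2) of the claim are then immediate from $w \in K$.

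For condition (3), the key step is to show that an extreme point of $K$ cannot have more than $k'$ coordinates strictly inside $(-1,1)$. Let $J = \{j \leq n' : |w_j| < 1\}$ and suppose for contradiction that $|J| > k'$. The linear map $d \mapsto V'd$ restricted to vectors supported on $J$ sends $\R^{|J|}$ into $\R^{k'}$, so its kernel is nontrivial; pick a nonzero $d \in \R^{n'}$ supported on $J$ with $V'd = 0$. For all sufficiently small $\eta > 0$, the perturbations $w \pm \eta d$ still satisfy $\|\cdot\|_\infty \leq 1$ (the box constraints remain strict on $J$ and are untouched off $J$) and still satisfy the equality constraints, so both belong to $K$. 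Since $w = \tfrac{1}{2}((w+\eta d)+(w-\eta d))$, this contradicts the extremality of $w$. Hence $|J| \leq k' \leq k$, and after relabelling the coordinates so that $J = \{1,\ldots,\tilde k\}$ with $\tilde k = |J|$, condition (3) holds. This relabelling is cosmetic: it simply permutes the columns of $V'$ and the coordinates of $u$ and $w$ in parallel, which preserves all inner products appearing in the claim.

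The only thing to get right is the dimension count in the perturbation step — one needs $|J| > k'$ (equivalently, $|J|$ strictly greater than the rank of $V'$) to guarantee the existence of a nonzero kernel direction supported on $J$. I do not expect a real obstacle here; this is just the standard fact that a vertex of a polytope $\{w \in [-1,1]^{n'} : V'w = b\}$ has at most $\mathrm{rank}(V') \leq k'$ fractional coordinates. Everything else is LP bookkeeping, and the argument does not use any properties of $v'_1,\ldots,v'_{k'}$ beyond their being $k'$ vectors in $\R^{n'}$.
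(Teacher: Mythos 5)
Your proposal is correct and is essentially the same argument the paper uses: the paper's iterative rounding (repeatedly adding a vector orthogonal to $v'_1,\ldots,v'_{k'}$ and to the already-fixed coordinate directions until another coordinate hits $\pm 1$) is just a constructive way of reaching an extreme point of your polytope $K$, and both rest on the same dimension count giving at most $k'$ fractional coordinates. Your bound $\tilde k \leq k'$ is in fact slightly sharper than the $\tilde k \leq k$ the paper states, though only $\tilde k \leq k$ is needed.
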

\begin{proof}
Define a sequence $u^{(0)},u^{(1)},\ldots$ as follows.
Set
$$u^{(0)} = u.$$
Let $a^{(1)}$ be orthogonal to $v'_1,\ldots,v'_{k'}$.
Let 
$$u^{(1)} = u^{(0)} + \alpha_1 a^{(1)}$$
so that
$\|u^{(1)}\|_\infty \leq 1$
and (at least) one of the entries in $u^{(1)}$ is $\pm 1$.
This can be obtained by increasing $\alpha_1$ from $0$ to $\infty$,
and stopping at the correct value.
Without loss of generality assume that $|u^{(1)}_{n'}| = 1$.
Let $a^{(2)}$ be orthogonal to $v'_1,\ldots,v'_{k'},e_{n'}$,
where $e_{n'} = (0,\ldots,0,1)$.
Let 
$$u^{(2)} = u^{(1)} + \alpha_2 a^{(2)}$$
so that
$\|u^{(2)}\|_\infty \leq 1$
and (at least) two of the entries in $u^{(2)}$ are $\pm 1$.
We can keep going for $t = n'-k$ steps and get
the desired $w$.
\end{proof}

If $\tilde k =0$ then we are done,
because by~\eqref{eqn:norm2} we know
that $\|v'_i\|_\infty  < n^{-0.2} \ll \theta$.
This means that every edge $[x,y]$ with $x = (w, x'')$
and $y = (y',x'')$ is not sliced by any hyperplane.
So, we can assume that $\tilde k >0$.

We now enter phase two of the construction. 
This phase uses additional randomness
to find the edge we are looking for.
Let $\delta \in \R^{n'}$ be a random vector distributed as follows.
Its coordinates are independent so that
$\delta_j+w_j$ takes values in $\{\pm 1\}$ and
$$\E \delta_j = 0 .$$
We see that the vertex $x' = \delta+w$ satisfies
for all $i$,
$$\E \ip{x'}{v'_i} = \ip{w}{v'_i} = \ip{u}{v'_i}.$$
Let $\tilde x$ be the first $\tilde k$ coordinates of $x'$.
Denote by $P$ the distribution of $\tilde x$.
Its marginals can be computed as follows
$$w_j = \E[ \delta_j + w_j] = 2  \Pr[\delta_j = 1-w_j] - 1$$
so that
$$\Pr[\delta_j=1-w_j] = \frac{1+w_j}{2}.$$
So, the distribution $P$ is a non trivial product distribution on $\{0,1\}^{\tilde k}$.
It also follows that
\begin{align*}
\E \delta^2_j 
& = \frac{1+w_j}{2} (1-w_j)^2  + \frac{1-w_j}{2}(-1-w_j)^2  \\
& = \frac{(1-w_j)(1+w_j)}{2} ( 1-w_j + 1+w_j) \\
& = 1-w^2_j .
\end{align*}
And that
$$\sigma^2_P  =\sum_j \frac{1-w_j^2}{4} \leq k.$$
Sample an edge $[\tilde x,\tilde y]$
in the cube $\{0,1\}^{\tilde k}$
using\footnote{We could use Theorem~\ref{thm:baker} instead.} Theorem~\ref{thm:monotone} for the distribution $P$.
The edge $[\tilde x,\tilde y]$ defines the final edge $[x,y]$.
The rest of the analysis is split between two cases.
For each $i$, let
$$\sigma_i^2 = \sum_{j} (1-w^2_j) {v'_{ij}}^2.$$
First, fix some $i$ so that 
$\sigma^2_i \leq n^{-0.0233}$;
if there are no such $i$'s go to case two.
We use one of Bernstein's inequalities.

\begin{theorem}[Bernstein]
\label{thm:bernstein}
Let $z_1,\ldots,z_\ell$ be independent random variables
with mean zero that are almost surely at most two
in absolute value.
Let $\sigma^2 = \sum_j \E z_j^2$.
Then, for all $t > 0$, we have
$\P\Big[\sum_j z_j \geq t \Big] \leq \exp\big( - \frac{t^2}{2 \sigma^2 + 2t}\big).$
\end{theorem}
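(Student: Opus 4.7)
The plan is to apply the standard Cram\'er--Chernoff method. For any $\lambda \in (0,1)$, Markov's inequality applied to $\exp(\lambda \sum_j z_j)$, together with independence of the $z_j$, gives
\[
\P\Big[\sum_j z_j \geq t\Big] \leq e^{-\lambda t}\, \E \exp\Big(\lambda \sum_j z_j\Big) = e^{-\lambda t} \prod_j \E e^{\lambda z_j}.
\]
Everything then reduces to controlling each individual moment generating function $\E e^{\lambda z_j}$ using only $\E z_j = 0$ and the almost sure bound $|z_j| \leq 2$.

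The key one-variable estimate I would establish is: for all $0 < \lambda < 1$ and each $j$,
\[
\E e^{\lambda z_j} \leq \exp\Big(\frac{\lambda^2\, \E z_j^2}{2(1-\lambda)}\Big).
\]
To see this, expand $e^{\lambda z_j}$ as a power series: the constant term contributes $1$, the linear term vanishes because $\E z_j = 0$, and for $k \geq 2$ I would combine the pointwise bound $|z_j|^k \leq 2^{k-2} z_j^2$ with $k! \geq 2^{k-1}$ to conclude $\lambda^k \E |z_j|^k / k! \leq (\lambda^k/2)\, \E z_j^2$. Summing the geometric tail over $k \geq 2$ gives $\E e^{\lambda z_j} \leq 1 + \lambda^2 \E z_j^2 / (2(1-\lambda))$, and then $1+x \leq e^x$ delivers the displayed bound.

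Multiplying across $j$ and using $\sum_j \E z_j^2 = \sigma^2$ collapses the product into $\exp(\lambda^2 \sigma^2/(2(1-\lambda)))$, so
\[
\P\Big[\sum_j z_j \geq t\Big] \leq \exp\Big(-\lambda t + \frac{\lambda^2 \sigma^2}{2(1-\lambda)}\Big).
\]
The last step is to optimize over $\lambda$. I would take the explicit choice $\lambda = t/(\sigma^2+t) \in (0,1)$, for which $\lambda t = t^2/(\sigma^2+t)$ and $\lambda^2 \sigma^2/(2(1-\lambda)) = t^2/(2(\sigma^2+t))$; subtracting yields exactly $-t^2/(2\sigma^2+2t)$, as claimed.

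The only real obstacle is getting the clean denominator $2\sigma^2 + 2t$. The more direct Taylor estimate $\E e^{\lambda z_j} \leq \exp(\E z_j^2(e^{2\lambda}-1-2\lambda)/4)$ is tighter but does not admit a closed-form minimizer of the required shape, so I would commit early to the cleaner relaxed bound $\lambda^2 \E z_j^2/(2(1-\lambda))$ and let the explicit choice of $\lambda$ do the rest of the work.
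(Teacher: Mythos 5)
Your proof is correct. The paper states this as a classical result (labeled ``Bernstein'') and provides no proof of its own, so there is nothing to compare against; your argument is the standard Cram\'er--Chernoff derivation. The chain of estimates all check out: the one-variable bound $\E e^{\lambda z_j}\le \exp\bigl(\lambda^2\E z_j^2/(2(1-\lambda))\bigr)$ follows from $|z_j|^k\le 2^{k-2}z_j^2$ and $k!\ge 2^{k-1}$ exactly as you say (with the geometric series requiring $\lambda<1$, which your choice $\lambda=t/(\sigma^2+t)$ satisfies whenever $\sigma^2>0$, and the $\sigma^2=0$ case is trivial); plugging in $\lambda=t/(\sigma^2+t)$ gives $\lambda t - \lambda^2\sigma^2/(2(1-\lambda)) = t^2/(2\sigma^2+2t)$ precisely, matching the theorem's constants. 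One could also reach the same conclusion from the sharper Bennett-type MGF bound $\E e^{\lambda z_j}\le\exp\bigl(\tfrac{\E z_j^2}{4}(e^{2\lambda}-1-2\lambda)\bigr)$ followed by the elementary inequality $e^{x}-1-x\le \tfrac{x^2}{2(1-x/3)}$, which yields the stronger denominator $2\sigma^2+\tfrac{4}{3}t$; but since the theorem as stated only claims $2\sigma^2+2t$, your relaxed bound is perfectly adequate and cleaner to optimize.
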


By~\eqref{eqn:norm2},
we know $\|v'_i\|_\infty < n^{-0.2}$.
So, by choice of $\theta$,
\begin{align*}
\Pr[|\ip{x'}{v'_i}-\gamma_i| \leq \|v'_i\|_\infty]
\leq \Pr[|\ip{\delta}{v'_i}| > n^{-0.0116}] .
\end{align*}
By Bernstein's inequality,
\begin{align*}
\Pr \left[ \left|\ip{\delta}{v'_i}\right| > n^{-0.0116} \right]
& = \Pr \left[ \tfrac{1}{\sigma_i} \left|\ip{\delta}{v'_i}\right| > \tfrac{1}{\sigma_i} n^{-0.0116} \right] \\
& \leq 
2 \exp\Big( - \frac{\tfrac{1}{\sigma_i^2} n^{-0.0232}}{2 + 2 \tfrac{1}{\sigma_i} n^{-0.0116}}\Big) .
\end{align*}
This means that the probability that the edge $[x,y]$
is sliced by $v_i,\mu_i$ is tiny,
regardless of the choice of $\tilde y$,
so that can do a union bound.

We now move to the second and final case.
By~\eqref{eqn:norm2} again,
$$\sum_i \sigma_i^2 = \sum_{j} (1-w^2_j) \sum_{i \leq k'} {v'}_{ij}^2 <  n^{-0.487} 4\sigma^2_P .$$
The number of $i$'s with $\sigma^2_i > n^{-0.0233}$
is at most 
$$4n^{-0.487+0.0233} \sigma^2_P
= 4n^{-0.4637} \sigma^2_P.$$
Theorem~\ref{thm:monotone} bounds from above
the probability that $[x,y]$ is sliced by 
a single hyperplane
(see also the discussion before the theorem).
The union bound over all $i$'s with $\sigma^2_i > n^{-0.0233}$
shows that the probability that one of them slices
$[x,y]$ is at most
$$4n^{-0.4637} \sigma^2_P \cdot \frac{1}{\sigma_P} 
\leq 4n^{-0.4637} \sqrt{k} \leq n^{-0.1} .$$

We are finally done. 
Let us recall the high-level structure of the argument.
The choice of $x''$ takes care of all hyperplanes not in $V'$
via the strong anti-concentration for vectors with many scales.
The construction of $V'$ takes care of most hyperplanes in $V'$
(the ones with small $\sigma_i$) via Bernstein's inequality.
The few hyperplanes that remain are dealt with
by understanding antichains of edges
in general product measures. 

\bibliographystyle{abbrv}
\bibliography{slicing}

\end{document}